\documentclass[10pt]{amsart}
\usepackage[T1]{fontenc}            
\usepackage[utf8]{inputenc}         
\usepackage[english]{babel} 
\usepackage{physics}
\usepackage{latexsym}
\usepackage{amsmath}
\usepackage{amssymb}
\usepackage{mathrsfs}
\usepackage{graphicx}
\usepackage{color}
\usepackage{pgfpages}
\usepackage{ifthen}
\usepackage{leftidx,tensor}
\usepackage{mathtools}
\usepackage{comment}
\usepackage{dsfont}
\usepackage[nocompress]{cite}

\usepackage[shortlabels]{enumitem}
\usepackage{aliascnt}
\usepackage[bookmarks=true,pdfstartview=FitH, pdfborder={0 0 0}, colorlinks=true,citecolor=red, linkcolor=blue]{hyperref}
\usepackage{bbm}
\usepackage{nicefrac}

\newtheorem{theorem}{Theorem}[section]

\newtheorem{corollary}[theorem]{Corollary}

\newtheorem{definition}[theorem]{Definition}
\newtheorem{example}[theorem]{Example}

\newtheorem{lemma}[theorem]{Lemma}

\newtheorem{proposition}[theorem]{Proposition}
\newtheorem{remark}[theorem]{Remark}

\numberwithin{equation}{section}

\setlist[enumerate]{font = \normalfont}

\newcommand {\R}	{\mathbb{R}}

\renewcommand{\Re}{\mathrm{Re}}

	\renewcommand{\phi}{\varphi}
	
\title[A multiplicative-noise mechanism for variability amplification in an Arctic EBM]{A Multiplicative-Noise Mechanism for Variability Amplification under Radiative Forcing in an Arctic Energy-Balance Model}

  \author{Gianmarco Del Sarto}
\address{Technische Universit\"{a}t Darmstadt\\
Fachbereich Mathematik\\
	Schlossgartenstr.\ 7\\
	64289 Darmstadt\\
	Germany\\
    }
\email{delsarto@mathematik.tu-darmstadt.de}

\author{Franco Flandoli}
\address{Scuola Normale Superiore\\
	Classe di Scienze\\
	P.za dei Cavalieri 7\\
	56126 Pisa\\
	Italy}
\email{franco.flandoli@sns.it}
\author{Marta Lenzi}
\address{Scuola Normale Superiore\\
	Classe di Scienze\\
	P.za dei Cavalieri 7\\
	56126 Pisa\\
	Italy}
\email{marta.lenzi@sns.it}
   \address{University School for Advanced Studies IUSS Pavia \\
Department of Science, Technology and Society\\
P.za della Vittoria 15\\
27100 Pavia\\
Italy
}
\email{marta.lenzi@iusspavia.it}

\begin{document}

\maketitle

\begin{abstract}
    We propose and analyse a mechanism by which $\mathrm{CO}_2$-driven radiative forcing can increase Arctic temperature variability in a stochastic Sellers-type energy-balance model. Starting from a fast-slow formulation in which insolation is modelled by a rapidly mean-reverting Ornstein-Uhlenbeck process while temperature evolves on a slow macroweather timescale, a Wong-Zakai reduction leads to a stochastic energy-balance equation with \emph{multiplicative} noise. After linearising around the stable equilibrium $T^{*,\lambda}$, we derive an explicit expression for the stationary variance of the temperature anomaly and prove that it increases monotonically with the forcing parameter $\lambda$ whenever $T^{*,\lambda}$ lies in the ice-sensitive regime of the co-albedo. We then consider a spatial anomaly model and its finite-difference semi-discretisation, obtaining a finite-dimensional SDE. Under natural stability conditions and nonnegative noise correlations, we establish a component-wise monotone increase of the stationary covariance matrix with respect to $\lambda$, including its off-diagonal entries. In particular, radiative forcing amplifies not only local variances but also the covariance between temperature anomalies at distinct spatial locations, indicating increased similarity in the variability of the anomaly field across space.
\end{abstract}
\tableofcontents

\section*{Introduction}

\subsubsection*{Background and motivation.} The Arctic is warming significantly faster than the global average, a phenomenon known as Arctic Amplification \cite{Serreze}. A primary driver of this process is the ice–albedo feedback: as snow and sea ice melt, the surface darkens and absorbs more shortwave radiation \cite{Hall,Winton}. Extreme events are controlled not only by shifts in the mean, but also by changes in variability. In particular, an increase in variance widens the distribution and makes persistent or extreme warm events more likely \cite{IPCC2001,IPCC2021}. This paper proposes and analyses, within a mathematically tractable framework, a mechanism by which
$\mathrm{CO_2}$-driven radiative forcing can \emph{increase Arctic temperature variability} through the
temperature-dependent ice-albedo feedback. 

\subsubsection*{Modelling framework and main results}
We work within the class of energy balance models (EBMs), an elementary class of climate models \cite{North17}. In their simplest form, EBMs describe
the evolution of a representative near-surface temperature through the balance between absorbed shortwave
radiation and emitted longwave radiation \cite{Budyko1969,Sellers1969,Ghil1976}. EBMs are appreciated for their mathematical tractability \cite{Diaz97,Floridia2020,DelSarto26b, Fornasaro}, and for capturing key mechanisms driving Earth's climate \cite{Cannarsa2023,Bastiaansen2022, Goodwin, DelSarto26a}. A central ingredient in Sellers-type
EBMs is a \emph{Lipschitz} co-albedo function $\beta(T)$ encoding the reduction of albedo with warming as sea
ice and snow retreat \cite{North1981,RomboutsGhil2015, Lohmann}. This co-albedo nonlinearity is the simplest
representation of the ice-albedo feedback and is known to generate multiple regimes and tipping-like
behaviour in low-order climate models \cite{Ashwin2020,Ghil2020,lucarini2022}. In the present paper, rather than focusing on regime shifts, we
address how radiative forcing modifies the \emph{stationary variance and covariance} of temperature anomalies around a
stable equilibrium.

A standard approach to modelling unresolved ``weather'' variability in climate dynamics is to introduce
stochastic forcing, in the spirit of Hasselmann-type stochastic climate models \cite{Hasselmann1976,Arnold2001,Watkins}. In EBMs, noise is often
assumed to be additive. However, this is a simplifying assumption, as already pointed out in the Hasselmann programme, where the correct form of the noise is the \emph{multiplicative} one \cite{Hasselmann1976}. This motivates our starting point: a fast-slow formulation in which insolation (or a proxy) evolves
on a fast ``weather'' timescale (e.g. one day) while temperature evolves on a slow macroweather timescale (e.g. one year). A related fast-slow insolation–temperature setup was recently considered in \cite{Longo26}; here we focus on the reduction to a closed equation for the slow variable and the resulting monotone variance/covariance amplification.

More precisely, we consider a coupled fast-slow system in which insolation is modelled by a rapidly
mean-reverting Ornstein-Uhlenbeck process with correlation time $\tau\ll 1$, while temperature obeys a
radiative balance law with co-albedo feedback. We prove that the integrated
fast fluctuations converge to Brownian motion as $\tau\to 0$. Since the integrated forcing is smooth for
fixed $\tau>0$, this naturally leads to a Wong-Zakai reduction: replacing the smooth driver by its Brownian
limit yields a stochastic EBM with \emph{multiplicative noise}. This provides a derivation which links fast weather variability to a reduced stochastic climate model with state-dependent noise.

Our main results quantify a forcing-induced \emph{variance amplification mechanism} in this setting. After
linearising the reduced multiplicative-noise EBM around a stable equilibrium $T^{*,\lambda}$ (depending on
the radiative forcing parameter $\lambda$), we obtain an explicit formula for the stationary variance of the
temperature anomaly. We then prove that this stationary variance increases monotonically with $\lambda$
whenever $T^{*,\lambda}$ lies in the \emph{ice-sensitive} regime of the co-albedo (i.e. where $\beta$ has
positive slope). The mechanism is clear. Increasing radiative forcing shifts the equilibrium
temperature upward; in the ice-sensitive regime the co-albedo is increasing, so the effective amplitude of
multiplicative noise and its impact on the anomaly dynamics increase accordingly, leading to higher
stationary variance.

We then move from a single temperature variable to a spatial model that accounts for horizontal heat transport. We study the temperature anomaly around a reference equilibrium profile and approximate the resulting stochastic dynamics on a grid. This leads to a finite-dimensional stochastic model for the anomaly field, for which we can analyse the long-time covariance explicitly. Under standard stability assumptions and non-negative noise correlations, we show that increasing radiative forcing increases the stationary covariance matrix component-wise, and hence increases the stationary spatial variance (its trace), which we use as a second-moment proxy for fluctuation intensity and for the likelihood of large anomalies. This covariance-amplification prediction is complementary to recent harmonic-domain statistical approaches for time-dependent temperature fields on the sphere, where changes (or nonstationarity) in the dependence structure are investigated via the time evolution of spherical-harmonic coefficients and related change-point/structural-break methodology; see \cite{Caponera2021,Caponera2023,Marinucci2021,Marinucci2024}.
\subsubsection*{Applicability and limitations} Our results are conditional on the existence of a stable deterministic temperature equilibrium
$T^{*,\lambda}$ around which we linearise and compute stationary second moments. For Arctic applications this
assumption is not automatic: it is essentially guaranteed in polar winter (when mean insolation is close to
zero), but it can be questionable if the forcing is interpreted as summer or annual-mean, when the
ice-albedo feedback is strongest. We nevertheless adopt stability as a first-order working hypothesis,
since key EBM parameters are uncertain and hard to calibrate regionally (i.e. in our Arctic setting), and because a more complete treatment would incorporate seasonality by prescribing a periodically time-dependent insolation. This would lead to a non-autonomous problem where the reference equilibrium $T^{*,\lambda}(t)$(time dependent) describes a seasonal cycle rather than a fixed equilibrium. We leave this extension to
future work.

\subsubsection*{Observational comparison} In the last part of the paper we compare our theoretical results with observations, asking whether the model-predicted variability changes can be seen in real data. Since the ice-albedo feedback is expected to be strongest when the mean state lies near the melt-sensitive temperature range, we focus on August \emph{sea surface temperature} in the Arctic and adjacent northern high latitudes and track changes in both the mean and variability over the satellite era. More generally, variance changes under warming can depend strongly on the region \cite{Lewis2017}; here this suggests that variability changes should be spatially non-uniform, with Arctic seas and transition zones behaving differently from persistently ice-covered or warm open-ocean areas.

\subsubsection*{Organisation of the paper} 
This paper is organised as follows. Section \ref{sec: one} introduces the stochastic Sellers-type EBM, derives the multiplicative-noise reduced
model via a fast-slow formulation and Wong-Zakai, and proves monotone variance amplification with respect
to radiative forcing. Section \ref{sec: two} formulates the spatial anomaly model, constructs a finite
difference semi-discretisation, derives covariance dynamics and its stationary limit, and establishes
monotonicity of the stationary covariance and spatial variance under suitable assumptions. Lastly, Section \ref{sec: three} compares the model predictions with observational August \emph{sea surface temperature} from the National Oceanic and Atmospheric Administration (NOAA) north of $60^\circ$N. We track temporal variability with moving decadal statistics and spatial heterogeneity with the within-region standard deviation, contrasting Arctic seas with regions showing little change.

\section{Radiative forcing amplifies variance in a multiplicative-noise 0D-EBM}
\label{sec: one}

\subsection{Preliminaries: energy balance models and Wong-Zakai principle}
\label{subsec: preliminaries}
\subsubsection{Energy balance climate models}
Among the lowest-complexity levels in the climate-model hierarchy lie the class of \emph{energy balance models}
\cite{North17}. Introduced independently by Budyko and Sellers in $1969$ \cite{Budyko1969,Sellers1969}, EBMs describe the evolution of a representative (near-surface) temperature
$T$ (in Kelvin) through the balance between absorbed shortwave radiation $\mathcal{R}_a$ and emitted
longwave radiation $\mathcal{R}_e$. In its zero-dimensional form, the EBM is given by
\begin{equation}
C_{\mathrm{eff}}\,\frac{dT}{dt}=\mathcal{R}_a-\mathcal{R}_e,
    \label{eq: EBM preliminaries}
\end{equation}
where $C_{\mathrm{eff}}>0$ is an effective heat capacity per unit area; equivalently, upon rescaling time one may set
$C_{\mathrm{eff}}=1$ and recover $\frac{dT}{dt}=\mathcal{R}_a-\mathcal{R}_e$. The model can be enriched by incorporating
a spatial variable (typically latitude, after zonal averaging) and representing meridional heat transport
by diffusion, which yields a nonlinear parabolic PDE, see \cite{Ghil1976,North1981}.

In this work we adopt a reduced, regionally EBM for the Arctic. This choice is motivated by the
fact that Arctic temperature variability and trends are strongly constrained by the local radiative energy
budget and amplified by cryospheric feedbacks, most notably the sea-ice/snow albedo feedback, which makes
reduced energy-balance descriptions informative at regional scales \cite{vandeWal,Dortmans}. Consistently, low-order energy-balance and sea-ice models have been adapted for recent studies for sea-ice stability, Arctic warming and ice-loss transitions \cite{Goodwin,Rose,Eisenman}.

For the emitted radiation $\mathcal{R}_e$ we use the classical Budyko empirical formula \cite{Budyko1969,North17} and assume
$$
\mathcal{R}_e(T)=r_0+r_1T,
$$ 
with $r_0 \in \mathbb{R}$ and $r_1 >0$. For the absorbed shortwave radiation we set
$$
\mathcal{R}_a(T,Q,\lambda )=Q \beta(T)+ \lambda ,
$$
where $Q$ denotes the (possibly regionally averaged) incoming solar flux and $\beta\in(0,1)$ is the
\emph{co-albedo}, i.e. the fraction of incident shortwave radiation absorbed ($\beta=1-\alpha$, with
$\alpha$ the albedo). Here, as in \cite{DelSartoNPG,Bastiaansen2022}, $\lambda$ models the effect of greenhouse gases on the radiation balance; the larger is $\lambda$, the larger is the radiative forcing, and thus the greenhouse effect which leads to a higher absorbed radiation. Following continuous piece-wise linear parametrisation in the EBM literature as in \cite{RomboutsGhil2015}, we consider
\begin{equation}
\beta(T) =\left \lbrace
    \begin{aligned}
    &\beta_{\mathrm{min}}, &\qquad & \text{ if } T \leq T_{l},\\
    &\beta_{\mathrm{min}} + \frac{\beta_{ \mathrm{max}}- \beta_{ \mathrm{min}}}{T_u - T_l} (T- T_l), &\qquad &\text{ if } T_l < T< T_{u}, \\
    &\beta_{ \mathrm{max}}, &\qquad & \text{ if } T \geq T_{u}.
    \end{aligned}
    \right.
    \label{eq: piecewise co-albedo}
\end{equation}
where $\beta_{\mathrm{min}}$ and $\beta_{\mathrm{max}}$ represent the limiting co-albedo values in, respectively, the ice-covered (cold) and ice-free
(warm) regimes; typical values in Sellers-type models are $\beta_{\min}\approx 0.38$ and $\beta_{\max}\approx
0.70$ \cite{North1981}. Here, $T_l = 263\,\mathrm{K}$ and $T_u = 300\,\mathrm{K}$ as in \cite{RomboutsGhil2015}. 
We emphasise that these values are not meant to represent the melting temperature of ice, but rather to parametrise the temperature range over which the \emph{effective} (regionally averaged) ice cover, and hence the co-albedo, transitions from its ice-covered to its ice-free limit \cite{North1981,RomboutsGhil2015}. 
This interpretation is consistent with the thermodynamics of melting. Since the phase change involves the latent heat of fusion, a large amount of energy is required to melt the ice, and the temperature can remain close to the freezing point while melting proceeds. In simple sea-ice/EBM formulations this “latent-heat buffering” is captured by treating melt in terms of an enthalpy (energy) budget when ice is present, see \cite{Eisenman,Eisenman2012}. Lastly, note that our EBM is of Sellers-type, meaning that the co-albedo is Lipschitz continuous. The other main class of EBMs is the Budyko type, which is characterised by a discontinuous co-albedo. In the latter case, it is known that uniqueness may fail; see, for example, \cite{Diaz97} for a discussion of non-uniqueness of weak solutions.

\subsubsection{Wong-Zakai principle.} It is worth recalling the Wong-Zakai principle, a tool that we will use in Section \ref{subsec: fast slow model}. Roughly speaking, it asserts that if we consider smooth approximations $W^\varepsilon_t$ of a Brownian motion $W_t$ such that
$$
W^\varepsilon_t \to W_t
$$
in a sufficiently strong sense, then the solution $Z^\varepsilon_t$ of the corresponding (pathwise deterministic, for fixed $\omega$) differential equation
$$
\frac{dZ^\varepsilon_t}{dt}= b(t,Z^\varepsilon_t)+\sigma(t,Z^\varepsilon_t) \frac{dW^\varepsilon_t}{dt},
$$
converges (in the same mode of convergence as $W^\varepsilon_t\to W_t$), as $\varepsilon\to 0$, to the solution of the SDE driven by \emph{Stratonovich noise}
$$
dZ_t=b(t,Z_t)\,dt+\sigma(t,Z_t)\circ dW_t.
$$
See, for instance, \cite[Chapter 5.2]{Karatza1991} or \cite{IkedaWatanabe1981,Twardowska1996} for more details on the Wong-Zakai principle.

For the sake of clarity, we recall that the Stratonovich stochastic integral $\int_0^t \sigma(s,Z_s)\circ dW_s$ is defined as the limit (in probability) of Riemann sums using midpoint evaluation,
$$
\sum_{k}\sigma \left(t_k,\frac{Z_{t_k}+Z_{t_{k+1}}}{2}\right)\left(W_{t_{k+1}}-W_{t_k}\right),
$$
whenever the limit exists. This contrasts with the It\^{o} integral $\int_0^t \sigma(s,Z_s)\,dW_s$, which is defined via left-point evaluation $\sigma(t_k,Z_{t_k})$. A key consequence is that Stratonovich calculus obeys the classical chain rule (as in ordinary calculus), while It\^{o} calculus involves an additional quadratic-variation correction term. Concretely, for sufficiently regular coefficients one has the conversion formula
$$
\int_0^t \sigma(s,Z_s)\circ dW_s = \int_0^t \sigma(s,Z_s)\, dW_s
+\frac{1}{2}\int_0^t \partial_z\sigma(s,Z_s) \sigma(s,Z_s)\,ds,
$$
so that an SDE written in Stratonovich form corresponds to an It\^{o} SDE with an extra correction term in the drift term, see \cite[Section 4.6]{Solin2019}.

\subsection{Fast–slow insolation–temperature model and reduction to a multiplicative-noise EBM}
\label{subsec: fast slow model}
We consider a fast-slow system describing the interaction between a \emph{slow} temperature process $T$ evolving on a macroweather timescale (e.g. one year) and a \emph{fast} component $X$ evolving on a weather timescale (e.g. one day). In our setting, $X$ represents (a proxy for) the insolation. We model $X$ as a mean-reverting Ornstein-Uhlenbeck process fluctuating around a climatological mean level $Q$, while the temperature follows a radiative balance law. More precisely, on the slow time variable $t$ we consider
\begin{equation}
\left \lbrace 
\begin{aligned}
        dX_t^\tau  & = - \frac{1}{\tau } (X_t^\tau- Q) dt + \frac{1}{\sqrt{\tau }}dW_t, \qquad &X_0^\tau &= x_0,\\
    \frac{dT^{\tau,\lambda}_t }{dt} &=  X_t^\tau \beta(T^{\tau,\lambda}_t) + \lambda - (r_0 + r_1 T^{\tau,\lambda}_t) ,  \qquad &T_0^{\tau,\lambda} & = \theta_0,
\end{aligned}
\right.
    \label{eq: fast-slow system}
\end{equation}
where $(W_t)_t$ denotes a Brownian motion, and $x_0, \theta_0$ are the deterministic initial conditions. The parameter $\tau >0$ represents the ratio of the fast and slow timescales (e.g. $\tau \approx 1 /365$ if the slow scale is one year and the fast one is one day). Even if $\tau$ is small, it is not infinitesimal in applications; this will be important in what follows.

A key feature of the scaling in \eqref{eq: fast-slow system} is that the variance of the invariant measure for $X^\tau$ does not depend on $\tau$: indeed, for the OU process $dX_t^\tau = -(1/\tau) (X_t ^\tau- Q)dt + (1/\sqrt{\tau }) dW_t$ the stationary law is Gaussian with mean $Q$ and variance $1/2.$ Thus $\tau$ controls the \emph{correlation time} of $X^\tau$, not its variance.

At this point, we would like to deduce a closed equation for the slow process $T_t^{\tau,\lambda}$.
\begin{lemma}
For any $t>0$, it holds
$$
   \lim_{\tau \to 0} \mathbb{E} \abs{\frac{1}{\sqrt{\tau}} \int_0^t X^\tau_s ds - \frac{Qt}{\sqrt{\tau}} - W_t }^2  =   0.
$$
\label{lemma: approssimazione per tau piccolo Q}
\end{lemma}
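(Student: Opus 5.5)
The plan is to work directly with the centred process $Y^\tau_t := X^\tau_t - Q$ and to use the integrated form of the Ornstein--Uhlenbeck equation itself, rather than the explicit solution formula. Integrating the first equation of \eqref{eq: fast-slow system} from $0$ to $t$ gives
$$
X^\tau_t - x_0 = -\frac{1}{\tau}\int_0^t (X^\tau_s - Q)\,ds + \frac{1}{\sqrt{\tau}} W_t .
$$
Multiplying by $\sqrt{\tau}$ and rearranging yields the exact identity
$$
\frac{1}{\sqrt{\tau}}\int_0^t X^\tau_s\,ds - \frac{Qt}{\sqrt{\tau}} - W_t = -\sqrt{\tau}\,\bigl(X^\tau_t - x_0\bigr).
$$
So the quantity inside the expectation equals $\tau\,\mathbb{E}\abs{X^\tau_t - x_0}^2$. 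It therefore suffices to show that $\mathbb{E}\abs{X^\tau_t - x_0}^2$ stays bounded uniformly for $\tau\in(0,1]$, with $t>0$ fixed.

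For this bound, write $X^\tau_t - x_0 = (X^\tau_t - Q) + (Q - x_0)$ and use $\abs{a+b}^2\le 2\abs{a}^2+2\abs{b}^2$. Since $Q - x_0$ is deterministic, it only remains to control $\mathbb{E}\abs{X^\tau_t - Q}^2$. From the explicit solution
$$
X^\tau_t - Q = e^{-t/\tau}(x_0 - Q) + \frac{1}{\sqrt{\tau}}\int_0^t e^{-(t-s)/\tau}\,dW_s ,
$$
together with the Itô isometry, we get
$$
\mathbb{E}\abs{X^\tau_t - Q}^2 = e^{-2t/\tau}(x_0-Q)^2 + \frac{1}{\tau}\int_0^t e^{-2(t-s)/\tau}\,ds = e^{-2t/\tau}(x_0-Q)^2 + \tfrac12\bigl(1 - e^{-2t/\tau}\bigr) \le (x_0-Q)^2 + \tfrac12 .
$$
This is the $\tau$-independence of the variance noted before the lemma. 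Combining the pieces,
$$
\mathbb{E}\abs{\tfrac{1}{\sqrt{\tau}}\int_0^t X^\tau_s\,ds - \tfrac{Qt}{\sqrt\tau} - W_t}^2 \le \tau\bigl(4(x_0-Q)^2 + 1\bigr) \to 0 ,
$$
which gives the claim, with a rate of order $\tau$. The estimate is in fact uniform in $t$.

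I do not expect a genuine obstacle here. The only point needing care is the justification of the Itô isometry step, which is a routine calculation since the integrand is deterministic. The key idea is to recognise that the integrated OU equation turns the error into a boundary term of size $\sqrt{\tau}$, instead of estimating the stochastic Fubini expression for $\int_0^t X^\tau_s\,ds$ directly. A direct approach is also possible but messier: it would give the error as $\sqrt{\tau}$ times $(x_0-Q)(1-e^{-t/\tau})$ plus the term $-\int_0^t e^{-(t-s)/\tau}\,dW_s$, whose second moment is $\tfrac{\tau}{2}(1-e^{-2t/\tau})$. That route leads to the same conclusion.
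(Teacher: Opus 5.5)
Your proof is correct, and it differs from the paper's in how the error term is identified.

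The paper integrates the explicit solution of the OU process in time. It then uses the stochastic Fubini theorem to rewrite $\frac{1}{\sqrt{\tau}}\int_0^t\int_0^s e^{-(s-r)/\tau}\,dW_r\,ds$ as $\sqrt{\tau}\bigl(W_t-\int_0^t e^{-(t-r)/\tau}\,dW_r\bigr)$. This exhibits the error as $\sqrt{\tau}(1-e^{-t/\tau})(x_0-Q)-\int_0^t e^{-(t-r)/\tau}\,dW_r$, which the paper bounds with $|a+b|^2\le 2|a|^2+2|b|^2$ and the It\^{o} isometry.

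You read off directly from the integral form of the SDE that the error equals $-\sqrt{\tau}\,(X^\tau_t-x_0)$. Inserting the explicit solution shows this is literally the same random variable as the paper's, so the two estimates coincide. In fact, since the cross term has zero mean, the exact value is $\tau(1-e^{-t/\tau})^2(x_0-Q)^2+\frac{\tau}{2}(1-e^{-2t/\tau})$, and neither your factor-$2$ splittings nor the paper's are needed.

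What your route buys is that it dispenses with stochastic Fubini and makes the mechanism transparent. The error is a boundary term of size $\sqrt{\tau}$, so the lemma reduces to the $\tau$-independence of the second moment of $X^\tau$, i.e.\ the scaling remarked on before the lemma. This is the simplest instance of the corrector (Poisson-equation) argument from homogenisation, with $\phi(x)=x-Q$ solving $\mathcal{L}\phi=-(x-Q)$ for the unit-time OU generator $\mathcal{L}$. It is this structure, rather than the explicit Gaussian computation, that extends to fast processes without closed-form solutions.

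The paper's computation, in turn, gives an explicit Gaussian representation of $\int_0^t X^\tau_s\,ds$ itself. Both routes give an $O(\tau)$ bound uniform in $t$.
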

\begin{proof}
For simplicity, we drop the dependence on $\tau$ in $X^\tau$. The fast process $X_t$ can be written explicitly as
    \begin{equation*}
        \begin{split}
                X_t &= x_0 e^{-t/\tau}+ \frac{1}{\tau} \int_0^t Qe^{-(t-s)/\tau} ds + \frac{1}{\sqrt{\tau}} \int_0^t e^{-(t-s)/\tau }dW_s \\
                & = x_0 e^{-t/\tau} + Q \left( 1-e^{-t/\tau} \right) + \frac{1}{\sqrt{\tau}} \int_0^t e^{-(t-s)/\tau }dW_s \\
                & = \left( x_0 - Q \right)e^{-t/\tau} + Q + \frac{1}{\sqrt{\tau}} \int_0^t e^{-(t-s)/\tau } dW_s.
        \end{split}
    \end{equation*}
    Thus, integrating in time, we get
    \begin{equation*}
        \begin{split}
            \int_0^t X_s ds &= \int_0^t   \left[\left( x_0 - Q \right)e^{-s/\tau} + Q + \frac{1}{\sqrt{\tau}} \int_0^s e^{-(s-r)/\tau } dW_r \right] ds \\
            & =  \tau \left( 1-e^{-t/\tau } \right)\left( x_0 - Q \right) + t Q + \frac{1}{\sqrt{\tau}} \int_0^t \int_0^s e^{-(s-r)/\tau } dW_r  ds.
        \end{split}
    \end{equation*}
    The last term on the right-hand side (RHS) can be computed applying Fubini-Tonelli for stochastic integrals
    \begin{equation*}
        \begin{split}
             \frac{1}{\sqrt{\tau}} \int_0^t \int_0^s e^{-(s-r)/\tau } dW_r  ds &=  \frac{1}{\sqrt{\tau}} \int_0^t \int_r^t e^{-(s-r)/\tau } ds dW_r   = \sqrt{\tau}  \int_0^t \left(1-e^{-(t-r)/\tau}  \right) dW_r \\
            & = \sqrt{ \tau} \left( W_t - \int_0^t e^{-(t-r)/\tau} dW_r \right).
        \end{split}
    \end{equation*}
    From the previous computations, we deduce
    \begin{equation*}
        \begin{split}
             \mathbb{E} \abs{\frac{1}{\sqrt{\tau}} \int_0^t X_s ds - \frac{Qt}{\sqrt{\tau}} - W_t }^2 &= \mathbb{E} \abs{\sqrt{\tau} \left(1 - e^{-t/\tau} \right) \left(x_0 - Q\right) - \int_0^t e^{-(t-r)/\tau} dW_r }^2 \\
             & \leq 2\tau \left(1 - e^{-t/\tau} \right)^2 \left(x_0 - Q\right)^2 + 2 \mathbb{E} \abs{\int_0^t e^{-(t-r)/\tau} dW_r}^2 \\
             &= 2\tau \left(1 - e^{-t/\tau} \right)^2 \left(x_0 - Q\right)^2 + 2   \int_0^t e^{-2(t-r)/\tau} dr \\
             & = 2\tau \left(1 - e^{-t/\tau} \right)^2 \left(x_0 - Q\right)^2 +  \tau \left( 1- e^{-2t/\tau} \right)
        \end{split}
    \end{equation*}
    where the first inequality follows from the Jensen's inequality, and the following inequality by It\^{o}-isometry. Taking the limits in $\tau$, we conclude the proof.
    
\end{proof}
Define the centred, integrated fast fluctuations
$$
W_t^\tau := \frac{1}{\sqrt{\tau}}\int_0^t (X_s^\tau-Q)\,ds .
$$
Since $X^\tau$ has continuous paths, $W^\tau$ is $C^1$ and
$$
dW_t^\tau = \frac{1}{\sqrt{\tau}}(X_t^\tau-Q)\,dt .
$$
Lemma~\ref{lemma: approssimazione per tau piccolo Q} yields $W_t^\tau \to W_t$ in $L^2$ for each fixed $t$.
Using this identity, the temperature equation in \eqref{eq: fast-slow system} can be rewritten as the random ODE
\begin{equation}
dT_t ^{\tau,\lambda} =
\left(Q\beta(T_t^{\tau,\lambda})+\lambda-\mathcal{R}_e(T^{\tau,\lambda} _t)\right)\,dt
+\sqrt{\tau}\,\beta(T_t^{\tau,\lambda})\,dW_t^\tau ,
\label{eq: temperature equation integrated by V tau}
\end{equation}
where the last integral is understood pathwise (Riemann-Stieltjes), since $W^\tau$ is $C^1$. We now invoke the Wong--Zakai principle (see Section~\ref{subsec: preliminaries}).
Since \eqref{eq: temperature equation integrated by V tau} is driven by the smooth process $W^\tau$, replacing $W^\tau$
by its Brownian limit $W$ leads to a Stratonovich interpretation of the limiting stochastic integral.
Accordingly, for $\tau\ll 1$ we approximate $T^{\tau,\lambda}$ by the solution $\widetilde T ^{\tau,\lambda}$ of
\begin{equation}
\label{eq: Stratonovich EBM sec 1}
d \widetilde{T}_t ^{\tau,\lambda}
= \left(Q\beta(\widetilde{T}_t ^{\tau,\lambda})+\lambda-\mathcal{R}_e(\widetilde T_t ^{\tau,\lambda})\right)\,dt
+ \sqrt{\tau}\,\beta(\widetilde{T}_t ^{\tau,\lambda})\circ dW_t.
\end{equation}
In It\^{o} form, \eqref{eq: Stratonovich EBM sec 1} becomes 
\begin{equation}
d\widetilde{T}_t ^{\tau,\lambda}
= \left(Q\beta(\widetilde{T}_t ^{\tau,\lambda})+\lambda-\mathcal{R}_e(\widetilde T_t^{\tau,\lambda})
+\frac{\tau}{2}\beta(\widetilde{T}_t ^{\tau,\lambda})\beta'(\widetilde{T}_t ^{\tau,\lambda})\right)dt
+ \sqrt{\tau}\,\beta(\widetilde{T}_t ^{\tau,\lambda}) dW_t.
\label{eq: closed EBM ito}
\end{equation}
We stress that we are not taking a literal limit $\tau\to0$ from \eqref{eq: temperature equation integrated by V tau} to \eqref{eq: Stratonovich EBM sec 1}. Doing so would indeed make the diffusion coefficient vanish. Rather, $\tau$ is the (small but positive) ratio between the fast and slow time scales, and the passage from
\eqref{eq: temperature equation integrated by V tau} to \eqref{eq: Stratonovich EBM sec 1} is a
\emph{model reduction step}: for $\tau\ll1$, the forcing generated by the fast OU process is
approximated by a white-noise forcing, with Stratonovich interpretation suggested by Wong-Zakai.

We study fluctuations near the stable equilibrium $T^{*,\lambda}$, so the \emph{temperature anomaly} $$
Y_t ^{\tau,\lambda}: = \widetilde T_t^{\tau,\lambda}- T^{*, \lambda}
$$ stays small and it is natural to linearise $\beta(T)$ around $T^{*,\lambda}$. The It\^{o}-Stratonovich correction is an $O(\tau)$ drift term, and since $ \tau \approx 1/365$ (with $\beta, \beta '$ bounded) it is much smaller than the remaining  deterministic drift  modelling the radiation balance $\mathcal{R}_a- \mathcal{R}_e $. So we can neglect it. Thus, we define $T^{*,\lambda}$ as the stable equilibrium temperature $T^* = T^{*,\lambda}$ of the deterministic stationary equation associated to \eqref{eq: closed EBM ito}. We stress that it is an assumption of our analysis that $T^{*,\lambda}$ is a stable equilibrium point for the deterministic EBM \eqref{eq: EBM preliminaries}. In other words, $T^{*,\lambda}$ is solution of
\begin{equation}
    Q \beta(T^{*,\lambda}) + \lambda - \mathcal{R}_e(T^{*,\lambda})  = 0,
    \label{eq: equation equilibrium temperature}
\end{equation}
and it is (locally asymptotically) stable for the deterministic dynamics, i.e. the linearisation of
$\frac{d T}{dt} = Q\beta(T)+\lambda-\mathcal R_e(T)$ at $T^{*,\lambda}$ has negative slope
$$
\frac{d}{dT}(Q\beta(T)+\lambda-\mathcal R_e(T))|_{T=T^{*,\lambda}}
=Q\beta'(T^{*,\lambda})-\mathcal R_e'(T^{*,\lambda})<0,
$$
equivalently 
\begin{equation}
b^\lambda=\mathcal R_e'(T^{*,\lambda})-Q\beta'(T^{*,\lambda})>0
\label{eq: b positive section 1}
\end{equation}
Since $\beta$ is piecewise linear by \eqref{eq: piecewise co-albedo}, for $T$ in a neighbourhood of $T^{*,\lambda}$ that does not cross $T_l$ or $T_u$ we have the affine representation
\begin{equation}
    \beta (T) = \beta (T^{*,\lambda })+ \beta'(T^{*,\lambda}) (T - T^{*,\lambda}),
    \label{eq: linearisation co-albedo}
\end{equation}
with $\beta'(T^{*,\lambda})=0$ outside $(T_l,T_u)$ and $\beta'(T^{*,\lambda})=(\beta_{\max}-\beta_{\min})/(T_u-T_l)$ inside. Thus, recalling that $\mathcal{R}_e (T) = r_0 +r_1T$ is already linear, we get, applying \eqref{eq: linearisation co-albedo} in \eqref{eq: closed EBM ito} and using \eqref{eq: equation equilibrium temperature} to cancel the constant drift,
\begin{equation}
    dY_t ^{\tau,\lambda} = \left( Q  \beta'(T^{*,\lambda}) -r_1\right) Y_t ^{\tau,\lambda} dt + \sqrt{\tau }\left(  \beta(T^{*,\lambda}) + \beta'(T^{*,\lambda}) Y_t ^{\tau,\lambda}\right) dW_t, \qquad Y_0 = y_0,
    \label{eq: linear EBM}
\end{equation}
where $y_0$ is some deterministic initial condition. Introducing the shorthands
$$
b^\lambda := -Q  \beta'(T^{*,\lambda}) +r_1, \qquad \sigma_0^\lambda := \beta(T^{*,\lambda}), \qquad \sigma_1^\lambda := \beta'(T^{*,\lambda}),
$$
then the linear EBM \eqref{eq: linear EBM} assumes the compact form
$$
d Y_t ^{\tau,\lambda} = -b^\lambda Y_t ^{\tau,\lambda} dt + \sqrt{\tau  }(\sigma_0^\lambda + \sigma_1^\lambda Y_t ^{\tau,\lambda}) dW_t, \qquad Y_0^{\tau,\lambda} = y_0.
$$

\begin{remark}[On the stability condition \eqref{eq: b positive section 1}]
Condition \eqref{eq: b positive section 1} is the stability requirement for the deterministic EBM at
$T^{*,\lambda}$. It is automatic outside the ice-sensitive range (i.e. $T^{*,\lambda} < T_l$ or $T^{*,\lambda} > T_u$) because $\beta'(T^{*,\lambda})=0$ and thus
$b^\lambda=r_1>0$. In the ice-sensitive range (i.e. $T^{*,\lambda} \in (T_l,T_u)$) the sign of $b^\lambda=r_1-Q\beta'(T^{*,\lambda})$
depends on the magnitude of $Q$. For an Arctic winter mean, we can reasonably consider $Q\approx 0$; so stability is
expected. On the other hand, for summer or annual-mean forcing the condition can fail. Since our stationary-variance
analysis requires a stable linearisation, we treat \eqref{eq: b positive section 1} as an explicit modelling
assumption delimiting the regime of validity. A more realistic approach would introduce seasonal forcing
$Q=Q(t)$, yielding a non-autonomous model; we leave this for future work.
\end{remark}

\subsection{Stationary variance and monotone dependence on radiative forcing}
\label{subsec: stationary variance and monotone dependence on CO2}
In this section we prove that the stationary variance associated to the solution of \eqref{eq: linear EBM} increases monotonically with respect to $\lambda$. This means that, as the $\text{CO}_2$ concentration in the atmosphere increases, the same holds for the variance of the climate, i.e. the stationary temperature anomaly.

We start by observing that, since $t \mapsto \int_0^t( \sigma_0^\lambda + \sigma_1^\lambda  Y_s ^{\tau,\lambda}) \, dW_s$ is a martingale, then the expected value of the process $Y_t ^{\tau,\lambda}$ satisfies 
\begin{equation}
\mathbb{E}[Y^{\tau,\lambda}_t]= y_0  -b^\lambda \int_0^t \mathbb{E}[Y^{\tau,\lambda}_s]\, ds.
\label{eq: ODE mean temperature anomaly}
\end{equation}
This can be seen by writing \eqref{eq: linear EBM} in the integral form and then considering the expectation. The solution of \eqref{eq: ODE mean temperature anomaly} is
$$
\mathbb{E}[ Y_t ^{\tau,\lambda}] = e^{-b^\lambda t}y_0, \qquad t \geq 0.
$$
Thus the limit for large time $t$ of the expected value of $Y_t^{\tau,\lambda}$ exists and it is independent of $y_0$ if and only if $ b^\lambda >0.$ In that case, we have
$$
\lim \limits_{t \to +\infty} \mathbb{E}[ Y_t^{\tau,\lambda}   ] = 0.
$$
This motivates once more why the assumption $b^\lambda >0$ from \eqref{eq: b positive section 1} is natural in this framework. Further, to simplify the computation, we also assume $y_0 = 0$, which gives $\mathbb{E}[Y_t^{\tau,\lambda}] = 0$ for any $ t\geq 0.$ The following result identifies the value of the stationary variance associated to the equation \eqref{eq: linear EBM}.
\begin{proposition}
\label{prop: stationary variance}
    Assume 
    $$
    b^\lambda >0,  \qquad  2b^\lambda - \tau (\sigma_1^\lambda )^2 >0.
    $$
    Then the following limit exists
    $$
    \lim \limits_{ t \to +\infty } \mathbb{E}[ \vert Y_t^{\tau,\lambda}  \vert ^2]=  \frac{\tau (\sigma_0^\lambda)^2}{2b^{\lambda}- \tau (\sigma_1^\lambda)^2}.
    $$
\end{proposition}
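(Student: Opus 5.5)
The plan is to derive a closed linear ODE for the second moment $m(t):=\mathbb{E}[|Y_t^{\tau,\lambda}|^2]$ and solve it explicitly. We work with the compact linear SDE $dY_t = -b^\lambda Y_t\,dt + \sqrt{\tau}(\sigma_0^\lambda+\sigma_1^\lambda Y_t)\,dW_t$, with $y_0=0$ so that $\mathbb{E}[Y_t]=0$ by \eqref{eq: ODE mean temperature anomaly}.

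First, apply It\^{o}'s formula to $Y_t^2$:
\begin{equation*}
d(Y_t^2) = \Big(-2b^\lambda Y_t^2 + \tau(\sigma_0^\lambda+\sigma_1^\lambda Y_t)^2\Big)dt + 2\sqrt{\tau}\,Y_t(\sigma_0^\lambda+\sigma_1^\lambda Y_t)\,dW_t .
\end{equation*}
Expanding the square gives $\tau(\sigma_0^\lambda)^2 + 2\tau\sigma_0^\lambda\sigma_1^\lambda Y_t + \tau(\sigma_1^\lambda)^2Y_t^2$. Taking expectations, and using that the stochastic integral is a true martingale with zero mean and that $\mathbb{E}[Y_t]=0$ removes the cross term, we obtain
\begin{equation*}
m'(t) = -\big(2b^\lambda-\tau(\sigma_1^\lambda)^2\big)\,m(t) + \tau(\sigma_0^\lambda)^2,\qquad m(0)=0 .
\end{equation*}
(Without $y_0=0$ the cross term $2\tau\sigma_0^\lambda\sigma_1^\lambda e^{-b^\lambda t}y_0$ would still decay because $b^\lambda>0$, so the limit would be unchanged.)

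Next, set $\kappa:=2b^\lambda-\tau(\sigma_1^\lambda)^2>0$. The solution is
\begin{equation*}
m(t)=\frac{\tau(\sigma_0^\lambda)^2}{\kappa}\big(1-e^{-\kappa t}\big),
\end{equation*}
which converges to $\tau(\sigma_0^\lambda)^2/\kappa$ as $t\to+\infty$. This is exactly the claimed limit.

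The main technical point is justifying the martingale property of $\int_0^t 2\sqrt{\tau}\,Y_s(\sigma_0^\lambda+\sigma_1^\lambda Y_s)\,dW_s$. This requires $\mathbb{E}\int_0^t Y_s^2(\sigma_0^\lambda+\sigma_1^\lambda Y_s)^2\,ds<\infty$, which in turn needs finite fourth moments on bounded intervals. These hold by standard moment estimates for SDEs with globally Lipschitz (here affine) coefficients and deterministic initial data. Alternatively, one can localise with stopping times $\tau_n=\inf\{t: |Y_t|\ge n\}$, derive the ODE inequality for the stopped process, and pass to the limit $n\to\infty$ by Fatou and dominated convergence.

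Either route also confirms a priori that $m(t)$ is finite and absolutely continuous, so differentiating the integral identity is legitimate.
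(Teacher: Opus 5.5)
Your proposal is correct and follows essentially the same route as the paper. Both apply It\^{o}'s formula to $Y_t^2$, take expectations using $\mathbb{E}[Y_t]=0$ (from $y_0=0$) to get the linear ODE $m'=-(2b^\lambda-\tau(\sigma_1^\lambda)^2)m+\tau(\sigma_0^\lambda)^2$ with $m(0)=0$, and solve it explicitly. Your justification of the true-martingale property (via fourth-moment bounds or localisation) and your remark that the limit is unchanged for $y_0\neq 0$ are points the paper's proof leaves implicit; in that remark it is $b^\lambda>0$ that makes the cross term $2\tau\sigma_0^\lambda\sigma_1^\lambda e^{-b^\lambda t}y_0$ decay.
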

\begin{proof}
    We drop the superscript $\tau,\lambda $ for readability. We assume for simplicity $y_0 = 0$ and use the notation $V(t) = \mathbb{E}[ \vert Y_t \vert ^2]$. Thus $Y_t$ is a centered process. The It\^{o} formula gives
    \begin{equation}
        \begin{split}
            d(Y_t)^2 &= 2Y_t dY_t + d [Y_\cdot]_t\\ &= 2 Y_t \left(  - b Y_t dt + \sqrt{\tau}(\sigma_0 + \sigma_1Y_t) dW_t\right) + \tau (\sigma_0 + \sigma_1 Y_t)^2 dt\\
            &= \left[-(2b - \tau \sigma_1^2) Y_t^2  + 2 \tau \sigma_0 \sigma_1 Y_t  + \tau \sigma_0^2\right] dt + dM_t,
        \end{split}
    \end{equation}
    where $[Y_\cdot]_t$ denotes the quadratic variation of the process $Y_t$ and $M_t $ is a martingale. Considering the expected value in the previous equality, and using that $\mathbb{E}[Y_t] = 0 $, yields
    $$
    \frac{d}{dt} V(t) =  -(2b - \tau \sigma_1^2) V(t) + \tau \sigma_0^2.
$$
Since $V(0)=0,$ its solution is
$$
V(t) = \tau \sigma_0^2 \int_0^t e^{-(2b - \tau \sigma_1^2)(t-s)} ds = \frac{\tau \sigma_0^2}{2b - \tau \sigma_1^2} \left( 1 - e^{-(2b - \tau \sigma_1^2)t} \right).
$$
Hence
$$
\lim \limits_{t \to +\infty } V(t) = \frac{\tau \sigma_0^2}{ 2b - \tau \sigma_1^2},
$$
provided that $2b - \tau \sigma_1^2 >0$.
\end{proof}
We are now ready to prove the main result of this section, which deals with the increase of the stationary variance under an increase of the radiative forcing, modelled by $\lambda$.
\begin{proposition} \label{prop: variance derivative}
Assume 
    $$
    b^\lambda >0,  \qquad  2b^\lambda - \tau (\sigma_1^\lambda )^2 >0, \qquad T^{*,\lambda} \in (T_l, T_u).
    $$
    Then the stationary variance is monotone increasing. In other words, setting
$$
\lambda \mapsto \mathrm{Var}_\infty (\lambda):= \lim \limits_{ t \to +\infty  } \mathbb{E}[ \vert Y_t^{\tau,\lambda} \vert ^2] = \frac{\tau (\sigma_0^\lambda )^2}{ 2 b^\lambda - \tau (\sigma_1^\lambda)^2} = \frac{\tau \beta(T^{*,\lambda})^2 }{2 (r_1 - Q \beta'(T^{*,\lambda})) - \tau \beta'( T^{*,\lambda})^2},
$$
we have $\mathrm{Var}_\infty (\lambda) ' >0.$
\end{proposition}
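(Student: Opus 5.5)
The plan is to differentiate the explicit formula for $\mathrm{Var}_\infty(\lambda)$ from Proposition \ref{prop: stationary variance} with respect to $\lambda$. In the ice-sensitive regime the piecewise-linear co-albedo \eqref{eq: piecewise co-albedo} has constant slope $\beta'(T^{*,\lambda}) = k := (\beta_{\max}-\beta_{\min})/(T_u-T_l) > 0$. The first step is to note that, as long as $T^{*,\lambda}$ stays in the open interval $(T_l,T_u)$, this holds for all nearby $\lambda$. Then $\sigma_1^\lambda = k$ and $b^\lambda = r_1 - Qk$ do not depend on $\lambda$. Hence the denominator $D := 2(r_1-Qk) - \tau k^2$ is a positive constant by assumption, and
$$
\mathrm{Var}_\infty(\lambda) = \frac{\tau}{D}\,\beta(T^{*,\lambda})^2 .
$$
All $\lambda$-dependence therefore sits in $\beta(T^{*,\lambda})$.

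The second step is to compute $dT^{*,\lambda}/d\lambda$ from the equilibrium equation \eqref{eq: equation equilibrium temperature}. On $(T_l,T_u)$ this equation is affine in $T$: $Q(\beta_{\min} + k(T-T_l)) + \lambda - r_0 - r_1 T = 0$. Solving gives
$$
T^{*,\lambda} = \frac{Q\beta_{\min} - QkT_l + \lambda - r_0}{r_1 - Qk},
$$
so $dT^{*,\lambda}/d\lambda = 1/(r_1-Qk) = 1/b^\lambda > 0$ by \eqref{eq: b positive section 1}. Equivalently, one can use the implicit function theorem: $F(T,\lambda) = Q\beta(T)+\lambda-\mathcal R_e(T)$ has $\partial_T F = -b^\lambda \neq 0$ and $\partial_\lambda F = 1$. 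This also shows that the stable branch $\lambda\mapsto T^{*,\lambda}$ is well defined and smooth locally, and stays inside $(T_l,T_u)$ for $\lambda$ in a neighbourhood.

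The last step is the chain rule:
$$
\mathrm{Var}_\infty'(\lambda) = \frac{2\tau}{D}\,\beta(T^{*,\lambda})\,\beta'(T^{*,\lambda})\,\frac{dT^{*,\lambda}}{d\lambda} = \frac{2\tau k\,\beta(T^{*,\lambda})}{D\, b^\lambda} > 0 .
$$
Positivity follows because $\tau>0$, $k>0$, $\beta\ge \beta_{\min}>0$, $D>0$ and $b^\lambda>0$.

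The only delicate point is justifying differentiability of $\lambda\mapsto T^{*,\lambda}$, since $\beta$ is merely Lipschitz. This requires the assumption $T^{*,\lambda}\in(T_l,T_u)$ to be open. It then rules out the kinks at $T_l$ and $T_u$, where $\beta'$ and hence $\sigma_1^\lambda$ and $b^\lambda$ jump. I would state explicitly that the equilibrium is followed along the stable branch selected by \eqref{eq: b positive section 1}.
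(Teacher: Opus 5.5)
Your proposal is correct and follows essentially the same route as the paper. You obtain $dT^{*,\lambda}/d\lambda = 1/b^\lambda>0$ from the equilibrium equation, observe that the denominator is constant because $\beta'$ is constant on $(T_l,T_u)$ (the paper phrases this as $\beta''=0$, hence $D'(\lambda)=0$), and apply the chain rule to the numerator $\tau\beta(T^{*,\lambda})^2$. Your remark that the openness of $(T_l,T_u)$ keeps the equilibrium branch away from the kinks of $\beta$, so that everything is locally smooth, is a useful justification that the paper leaves implicit.
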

\begin{proof}
Assume for simplicity $y_0 = 0$. Recall that \(T^{*,\lambda}\) is defined by
$$
Q\beta(T^{*,\lambda})+\lambda-\mathcal R_e(T^{*,\lambda})=0,
$$
and that the linearised coefficients are
$$
b^\lambda = r_1 - Q\beta'(T^{*,\lambda}), 
\qquad 
\sigma_0^\lambda=\beta(T^{*,\lambda}), 
\qquad 
\sigma_1^\lambda=\beta'(T^{*,\lambda}).
$$
By Proposition \ref{prop: stationary variance}, the stationary variance of the temperature anomaly \(Y_t^{\tau,\lambda} =\widetilde T_t^{\tau,\lambda}-T^{*,\lambda}\) is
$$
\mathrm{Var}_\infty(\lambda)
=\lim_{t\to\infty}\mathbb E|Y_t^{\tau,\lambda} |^2
=\frac{\tau(\sigma_0^\lambda)^2}{2b^\lambda-\tau(\sigma_1^\lambda)^2}
=
\frac{\tau\,\beta(T^{*,\lambda})^2}{\,2\bigl(r_1-Q\beta'(T^{*,\lambda})\bigr)-\tau\,\beta'(T^{*,\lambda})^2\,}.
$$

\textit{Step 1: increasing $\lambda$ increases the equilibrium temperature $T^{*,\lambda}$.}
Differentiating implicitly the equilibrium equation in \(\lambda\) gives
$$
\left(Q\beta'(T^{*,\lambda})-\mathcal{ R}_e'(T^{*,\lambda})\right) \frac{dT^{*,\lambda}}{d\lambda}+1=0.
$$
Since $\mathcal R_e'(T)\equiv r_1 $ and $b^\lambda=r_1-Q\beta'(T^{*,\lambda})$, this becomes
$$
\frac{d T^{*,\lambda }}{d  \lambda}=\frac{1}{b^\lambda}>0.
$$

\textit{Step 2: monotone increase in stationary variance.}
For simplicity, set
$$
N(\lambda):= \tau \beta(T^{*,\lambda})^2,  \qquad D(\lambda) := 2\bigl(r_1-Q\beta'(T^{*,\lambda})\bigr)-\tau\,\beta'(T^{*,\lambda})^2
$$
so that
$$
\mathrm{Var}_\infty (\lambda) = \frac{N(\lambda )}{D(\lambda)}.
$$
In Step $1$ we have proved that $\lambda \mapsto T^{*,\lambda}$ is monotone increasing. Further, the co-albedo $\beta$ is increasing by the standard modelling of EBMs. Hence
\begin{equation}
    N'(\lambda) = 2 \tau \beta(T^{*,\lambda}) \beta'(T^{*,\lambda }) \frac{d T^{*,\lambda }}{d  \lambda} >0.
    \label{eq: increasing numerator}
\end{equation}
Note that $\beta'(T^{*,\lambda})$ is non-zero (and positive), since $T^{*,\lambda} \in (T_l, T_u)$, thanks to \eqref{eq: piecewise co-albedo}. Regarding $D(\lambda),$ since $\beta$ is piece-wise linear, then $\beta'' = 0.$ Hence
\begin{equation}
D'(\lambda) =- 2 \beta '' (T^{*,\lambda})( Q+ \tau \beta ' (T^{*,\lambda})) \frac{d T^{*,\lambda}}{d\lambda}=0.
    \label{eq: decreasing denominator}
\end{equation}

Combining \eqref{eq: increasing numerator} and \eqref{eq: decreasing denominator}, we conclude that $$
\mathrm{Var}'_\infty (\lambda) = \frac{N'(\lambda ) D(\lambda ) - N(\lambda) D'(\lambda)}{D(\lambda)^2} = \frac{N'(\lambda ) D(\lambda )}{D(\lambda)^2}>0,
$$
since $D(\lambda)>0$ thanks to the assumption $2b^\lambda - \tau (\sigma_1^\lambda)^2 >0$.
\end{proof}
The previous result indicates, in climate terms, that stronger $\mathrm{CO}_2$ forcing not only warms the equilibrium states, but also that, via the ice-albedo feedback, amplifies the temperature anomaly around the equilibrium state. Indeed, the monotone increase of the stationary variance can be read directly from the explicit formula
$$
\mathrm{Var}_\infty(\lambda)
=\frac{\tau\,\beta(T^{*,\lambda})^2}{\,2\bigl(r_1-Q\beta'(T^{*,\lambda})\bigr)-\tau\,\beta'(T^{*,\lambda})^2\,}.
$$
By Step 1 in the previous proof, increasing $\lambda$ increases the equilibrium temperature $T^{*,\lambda}$. If $T^{*,\lambda}\in (T_l,T_u)$, then $\beta$ is strictly increasing, hence the numerator
$\tau\,\beta(T^{*,\lambda})^2$ increases with $\lambda$. Moreover, since $\beta$ is affine on $(T_l,T_u)$, we have $\beta'(T^{*,\lambda})\equiv (\beta_{\max}-\beta_{\min})/(T_u-T_l)$,
so the denominator $2(r_1-Q\beta'(T^{*,\lambda}))-\tau\,\beta'(T^{*,\lambda})^2$ is constant (in this regime). Therefore $\mathrm{Var}_\infty(\lambda)$ increases with $\lambda$. In climate terms, stronger $\mathrm{CO}_2$ forcing not only warms the equilibrium state, but also amplifies fluctuations around it via the temperature dependence of the co-albedo (ice-albedo feedback).

\section{Spatial anomaly model and finite-dimensional approximation} 
\label{sec: two}
In this section we extend the zero-dimensional stochastic energy balance model \eqref{eq: closed EBM ito} from Section \ref{sec: one} to a space-dependent version, appropriate for a regional model (the Arctic). This is done by introducing horizontal heat transport. We first formulate in Section \ref{subsec: spde dirichlet} an energy-balance SPDE with \emph{multiplicative} noise, with Dirichlet non-homogeneous boundary data representing the influence of adjacent areas. In Section \ref{subsec: fd anomaly} we then construct a finite-difference semi-discretisation, obtaining a finite-dimensional SDE for the temperature anomaly. In Section \ref{subsec: Stationary covariance for the semi-discrete anomaly SDE} we derive the closed evolution equation for the covariance matrix of the SDE and identify its stationary limit. Finally, in Section \ref{subsec: monotone covariance} we prove that the stationary covariance increases  (entry-wise) with the radiative forcing parameter $\lambda$, given natural assumptions on the covariance of the noise and the deterministic dynamics.

\subsection{Spatial EBM with Dirichlet data and anomaly formulation}
\label{subsec: spde dirichlet} 
Let $\mathcal D\subset\mathbb{R}^2$ be a bounded domain with sufficiently regular boundary $\partial  \mathcal D$.
We consider a Hasselmann-type stochastic energy balance model with diffusion, i.e. the spatially dependent extension of equation \eqref{eq: closed EBM ito}. The (near-surface)
temperature field $T^{\tau,\lambda}(t,x)$, depending on the time-scale parameter $\tau$ and the radiative forcing $\lambda$, satisfies non-homogeneous Dirichlet boundary conditions
\begin{equation}
T^{\tau,\lambda}(t,\xi)=\theta(\xi), \qquad t \geq 0, \ \xi\in\partial  \mathcal D,
\label{eq: dirichlet T}
\end{equation}
where $\theta$ is a prescribed boundary profile; in the following, we assume $\theta $ to be independent of the radiative forcing parameter $\lambda$, but our analysis extends to the case of $\theta = \theta^\lambda$.
On $\mathcal D$ we postulate the It\^o SPDE
\begin{equation}
\left \lbrace 
\begin{aligned}
dT_t ^{\tau,\lambda} &= \left(\Delta T_t ^{\tau,\lambda}+ Q\,\beta(T_t^{\tau,\lambda})+\lambda-\mathcal R_e(T_t ^{\tau,\lambda})\right)dt
+ \sqrt{\tau}\,\beta(T_t ^{\tau,\lambda})\,dW_t, \\
T^{\tau,\lambda}|_{\partial \mathcal{D}} &= \theta\\
T^{\tau,\lambda}|_{t = 0} & = T_0 
\end{aligned}
\label{eq: SPDE T}
\right.
\end{equation}
where $ Q= Q(x)$ denotes the given insolation function, $\beta$ is the
co-albedo, $\lambda$ is the radiative forcing parameter, $W_t$ is a Wiener process on $L^2(\mathcal{D})$, and $T_0$ is the initial condition. We do not specify the details of these objects here, as we use them only heuristically; the interested reader may consult \cite{DPZ}.

Assume, for simplicity, that the Dirichlet datum $\theta$ is time-independent.
We define the reference profile $T^{*,\lambda}$ as a (stable) stationary solution of the deterministic
problem with the same boundary values
\begin{equation}
\begin{cases}
\Delta T^{*,\lambda}(x) + Q(x)\,\beta(T^{*,\lambda}(x)) + \lambda - \mathcal{R}_e(T^{*,\lambda}(x))=0,
& x\in \mathcal  D,\\
T^{*,\lambda}(\xi)=\theta(\xi), & \xi\in\partial \mathcal D.
\end{cases}
\label{eq: elliptic equilibrium dirichlet}
\end{equation}
The temperature anomaly is then defined by
\[
Y_t^{\tau,\lambda}(x) :=T_t^{\tau,\lambda}(x)-T^{*,\lambda}(x).
\]
By construction, \eqref{eq: dirichlet T} and \eqref{eq: elliptic equilibrium dirichlet} yield
\begin{equation}
Y_t^{\tau,\lambda}(\xi)=0,\qquad \xi \in \partial \mathcal{D},
\label{eq: dirichlet Y homogeneous}
\end{equation}
i.e. the anomaly satisfies homogeneous Dirichlet boundary conditions. Subtracting \eqref{eq: elliptic equilibrium dirichlet} from \eqref{eq: SPDE T} gives the anomaly SPDE
\begin{equation}
\label{eq: SPDE anomaly exact}
dY_t^{\tau,\lambda}=
\left(\Delta Y_t^{\tau,\lambda} + Q(\beta(T^{*,\lambda}+Y_t^{\tau,\lambda})- \beta(T^{*,\lambda}))-(\mathcal{R}_e(T^{*,\lambda}+Y_t^{\tau,\lambda})-\mathcal{R}_e(T^{*,\lambda}))\right)dt
+\sqrt{\tau}\,\beta(T^{*,\lambda}+Y_t^{\tau,\lambda})\,dW_t,
\end{equation}
with homogeneous Dirichlet boundary condition \eqref{eq: dirichlet Y homogeneous}. As in the zero-dimensional case, if $y $ is small, it holds the relation
$$
\beta(T^{*,\lambda}+y) =  \beta(T^{*,\lambda})+\beta'(T^{*,\lambda})\,y.
$$
Since in our model $\mathcal{R}_e$ is affine (e.g.\ $\mathcal{R}_e(T)=r_0+r_1T$), we obtain the linear multiplicative SPDE
\begin{equation}
\label{eq: SPDE anomaly linear}
\left \lbrace
\begin{aligned}
    dY_t ^{\tau,\lambda}
&=
\left(\Delta Y_t ^{\tau,\lambda} - a^\lambda Y_t ^{\tau,\lambda}\right)dt
+\sqrt{\tau}\left(\beta(T^{*,\lambda})+\beta'(T^{*,\lambda})\,Y_t ^{\tau,\lambda}\right)dW_t,\\
Y^{\tau,\lambda}|_{\partial \mathcal{D} } & = 0, \\
Y^{\tau,\lambda}|_{t = 0} &= T_0 - T^{*,\lambda},
\end{aligned}
\right.
\end{equation}
where
$$
a^\lambda(x):=- Q(x)\,\beta'(T^{*,\lambda}(x))+\mathcal{R}_e'(T^{*,\lambda}(x)).
$$
Equation \eqref{eq: SPDE anomaly linear} is complemented with homogeneous Dirichlet boundary conditions
\eqref{eq: dirichlet Y homogeneous}. We are not interested in the well-posedness of \eqref{eq: SPDE anomaly linear} here; the SPDE is a formal starting point for the semi-discrete covariance analysis.

\subsection{Finite-difference semi-discretisation}
\label{subsec: fd anomaly}

We now derive a finite-difference approximation of \eqref{eq: SPDE anomaly linear} on a two-dimensional mesh.
For simplicity we present the construction on a rectangular grid; the resulting semi-discrete SDE has the same
structure on general meshes, with $A_\Delta$ replaced by the corresponding stiffness matrix. We refer to \cite{Quarteroni,Leveque} for detailed treatments of finite difference methods for parabolic PDEs.

To describe the numerical discretisation, assume for simplicity $\mathcal D=(0,L_x)\times(0,L_y)$. Consider the uniform grid
$x_i=i h_x$ for $i=0,\dots,N_x$ and $y_j=j h_y$ for $j=0,\dots,N_y$.
We denote by
\[
\mathcal I:=\{1,\dots,N_x-1\}\times\{1,\dots,N_y-1\}
\]
the set of interior index pairs, and by
\[
z_{i,j}:=(x_i,y_j)\in\mathcal D,\qquad (i,j)\in\mathcal I,
\]
the corresponding interior grid points. We enumerate the interior nodes by a single index
$m=1,\dots,d$ (with $d=(N_x-1)(N_y-1)$) via the bijection
\[
m=m(i,j):=(j-1)(N_x-1)+i,
\qquad (i,j)\in\mathcal I,
\]
and write $z_m:=z_{i,j}$ whenever $m=m(i,j)$. Define the nodal unknowns $Y_{i,j} ^{\tau,\lambda}(t):=Y_t^{\tau,\lambda}(z_{i,j})$ for $(i,j)\in\mathcal I$.
Using the bijection $m=m(i,j)$, we collect these interior nodal values into the vector
$Y_t ^{\tau,\lambda}\in\mathbb R^d$, with $d=(N_x-1)(N_y-1)$, by setting $(Y_t ^{\tau,\lambda})_m := Y_{i,j}^{\tau,\lambda}(t)$ whenever $m=m(i,j)$. The five-point stencil for the Laplacian at an interior node reads
$$
(\Delta_h Y^{\tau,\lambda})_{i,j}
=
\frac{Y_{i+1,j}^{\tau,\lambda}-2Y_{i,j}^{\tau,\lambda}+Y_{i-1,j}^{\tau,\lambda}}{h_x^2}
+
\frac{Y_{i,j+1}^{\tau,\lambda}-2Y_{i,j}^{\tau,\lambda}+Y_{i,j-1}^{\tau,\lambda}}{h_y^2},
$$
see \cite[Section 3.2]{Leveque} for more details on the discretisation of the Laplace operator. Since $Y^{\tau,\lambda}$ satisfies homogeneous Dirichlet boundary conditions \eqref{eq: dirichlet Y homogeneous}, boundary
values are zero and no boundary forcing appears. In matrix form,
\begin{equation}
\Delta_h Y_t^{\tau,\lambda} = A_\Delta Y_t^{\tau,\lambda},
\label{eq: discrete laplacian homogeneous}
\end{equation}
where
\begin{equation}
A_\Delta =
\frac{1}{h_x^2}\left(I_{N_y-1}\otimes A_x\right)
+
\frac{1}{h_y^2}\left(A_y\otimes I_{N_x-1}\right),
\label{eq: Adelta kron sum}
\end{equation}
and
$
A_x=\mathrm{tridiag}(1,-2,1)\in\mathbb R^{(N_x-1)\times(N_x-1)},$ $
A_y=\mathrm{tridiag}(1,-2,1)\in\mathbb R^{(N_y-1)\times(N_y-1)}.$ Define the sampled coefficients on interior nodes
$$
b_m^\lambda := a^\lambda(z_m),\qquad
d_m^\lambda := \beta'(T^{*,\lambda}(z_m)),\qquad
f_m^\lambda := \beta(T^{*,\lambda}(z_m)),\qquad m=1,\dots,d.
$$
Set
$$
B^\lambda := \mathrm{diag}(b^\lambda)  \in \mathbb{R}^{d\times d}, \qquad D^\lambda := \mathrm{diag}(d^\lambda)\in \mathbb{R}^{d\times d},
\qquad f^\lambda :=     (f_i^\lambda)_{i=1}^d   \in \mathbb{R}^d.
$$
To discretise the spatially correlated noise, let $C\in\mathbb{R}^{d\times d}$ be the discrete covariance
matrix induced by the covariance operator of $W$ on the grid, and fix a factorisation $C=LL^T$.
Let $(W_t)_t$ be a $d$-dimensional Brownian motion. Then the semi-discrete approximation of
\eqref{eq: SPDE anomaly linear} reads
\begin{equation}
dY_t^{\tau,\lambda}= (A_\Delta- B^\lambda) Y_t^{\tau,\lambda} \,dt
+\sqrt{\tau} \mathrm{diag}\left(D^\lambda Y_t^{\tau,\lambda} + f^\lambda\right) L dW_t,
\qquad Y^{\tau,\lambda}|_{t=0}=y_0.
\label{eq: SDE anomaly fd}
\end{equation}

We conclude this section by introducing a scalar proxy for the fluctuation level of the semi-discrete anomaly process \eqref{eq: SDE anomaly fd}, given by the trace of its covariance matrix. From now on, fix $\tau>0$ and write $Y^\lambda := Y^{\tau,\lambda}$.

\begin{definition}[Spatial Variance]
    Let $(Y_t^{\lambda})_t $ be the solution of the SDE \eqref{eq: SDE anomaly fd} and let $\Gamma_t ^{\lambda} := \mathbb{E}[ (Y_t^{\lambda}- \mathbb{E}[Y^{\lambda}_t]) (Y_t^{\lambda} - \mathbb{E}[Y^\lambda_t])^T]$ be its covariance matrix. The \emph{spatial variance} at time $t$ is
    $$
    \mathrm{Var}_{\mathrm{sp}} (t;\lambda ) := \mathrm{Tr} (\Gamma_t^\lambda),
    $$
    and, whenever the limit exists, the \emph{stationary spatial variance} is
    $$
    \mathrm{Var}_{\mathrm{sp},\infty }(\lambda) := \mathrm{Tr} (\Gamma_\infty ^\lambda), \qquad \Gamma_\infty ^\lambda := \lim_{t \to \infty } \Gamma_t^\lambda.
    $$
    \label{def: spatial variance}
\end{definition}
In the discussion of spatial variance below we assume $y_0=0$. In the next subsection we briefly allow general $y_0$ to describe the mean, and then we set $y_0=0$ again to obtain an autonomous covariance equation. Thus, with the assumption $y_0 = 0$, from the fact that the drift is linear and the stochastic integral is a martingale,
it follows that $\mathbb{E}[Y_t^\lambda]=0$ for all $t\geq 0$. In this case,
$$
\mathrm{Var}_{\mathrm{sp}}(t;\lambda)=\mathrm{Tr}(\Gamma_t^\lambda)
=\sum_{i=1}^d \mathrm{Var}(Y_{t,i}^\lambda)
=\mathbb{E}\left[\sum_{i=1}^d |Y_{t,i}^\lambda|^2\right].
$$
So $\mathrm{Var}_{\mathrm{sp}}(t; \lambda)$ is the sum of the local fluctuation intensities over the domain, and
$\mathrm{Var}_{\mathrm{sp},\infty}(\lambda)$ measures this total variance in the stationary regime.

\begin{remark}[Stationary spatial variance and extreme weather events] 
It is natural to focus on the \emph{stationary} spatial variance
$\mathrm{Var}_{\mathrm{sp},\infty}(\lambda)=\mathrm{Tr}(\Gamma_\infty^\lambda)$, since it measures the fluctuation level of the
temperature anomaly in the long-time regime corresponding to the \emph{climate configuration} determined by the
radiative forcing $\lambda$ (after transients have died out). In particular, comparing
$\mathrm{Var}_{\mathrm{sp},\infty}(\lambda_1)$ and $\mathrm{Var}_{\mathrm{sp},\infty} (\lambda_2)$ quantifies how the typical
amplitude of persistent spatial fluctuations changes when the forcing level (e.g.\ $\mathrm{CO}_2$) is modified.

Although extreme events are not determined by second moments alone, $\mathrm{Var}_{\mathrm{sp}}(t;\lambda)$ provides a
useful second-moment control on the probability of observing a large anomaly somewhere in the domain. Assume for simplicity $y_0=0$, so that $\mathbb{E}[Y_t^\lambda]=0$ for all $t\geq 0$.  For any
$\vartheta>0$, by Markov's inequality
$$
\mathbb{P}\left( \max_{ 1 \leq i \leq d}  \vert Y_{t,i}^\lambda  \vert  \geq \vartheta \right) \leq \frac{1}{\vartheta ^2} \mathbb{E}\left[ \max_{ 1 \leq i \leq d} \vert Y_{t,i}^\lambda \vert ^2\right] \leq \frac{1}{\vartheta^2} \mathbb{E}\left[ \sum_{ i = 1}^d \vert  Y_{t,i}^\lambda \vert^2 \right] = \frac{\mathrm{Var}_{\mathrm{sp}}(t;\lambda)}{\vartheta ^2}.
$$
and the same estimate holds in the stationary regime with $\mathrm{Var}_{\mathrm{sp},\infty}(\lambda)$.
\end{remark}

\subsection{Stationary covariance for the semi-discrete anomaly SDE}
\label{subsec: Stationary covariance for the semi-discrete anomaly SDE}
Recall from Definition~\ref{def: spatial variance} that our scalar proxy for the fluctuation level is the spatial variance $\mathrm{Var}_{\mathrm{sp}}(t;\lambda)=\Tr(\Gamma_t^\lambda)$ and its stationary limit $\mathrm{Var}_{\mathrm{sp},\infty}(\lambda)=\Tr(\Gamma_\infty^\lambda)$ whenever it exists. Thus, establishing existence and monotonicity properties for $\Gamma_\infty^\lambda$ immediately yields corresponding results for $\mathrm{Var}_{\mathrm{sp},\infty}(\lambda)$.
\subsubsection{Preliminaries: matrix order, $M$-matrices, and vectorisation}
\label{subsec: preliminaries matrices}

We collect a few standard facts that will be used to control the sign and monotonicity properties of the stationary covariance. For $A\in\mathbb{R}^{d\times d}$ we denote by $\rho(A)$ its spectral radius,
$$
\rho(A):=\max\left \lbrace \abs{\mu}:\mu\ \text{eigenvalue of }A\right \rbrace,
$$
and we write $A\geq B$ for the component-wise order. Recall that a matrix $A$ is a \emph{$Z$-matrix} if $a_{ij}\leq 0$ for all $i\neq j$. A \emph{(non-singular) $M$-matrix} is a $Z$-matrix of the form $A=sI-B$ with $B\geq0$ and $s>\rho(B)$; equivalently, $A$ is a $Z$-matrix with $\Re(\mu)>0$ for every eigenvalue $\mu$, see \cite{Plemmons1981,Plemmons1994}. The key consequence we will use is the element-wise non-negativity of the inverse.
\begin{theorem}
\label{thm: characterisation invertible M matrix}
Let $A$ be a $Z$-matrix. The following are equivalent:
\begin{enumerate}
\item[(i)] $A$ is a non-singular $M$-matrix,
\item[(ii)] $\Re(\mu)>0$ for every eigenvalue $\mu$ of $A$,
\item[(iii)] $A$ is invertible and $A^{-1}\geq 0$ component-wise.
\end{enumerate}
\end{theorem}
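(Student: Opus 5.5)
I will prove the equivalences by the cycle (i)$\Rightarrow$(iii)$\Rightarrow$(ii)$\Rightarrow$(i), using Perron--Frobenius theory for non-negative matrices together with the Neumann series. Since $A$ is a $Z$-matrix, I first write $A=sI-B$ with $s:=\max_i a_{ii}$ (or any larger number) and $B:=sI-A$. Then $B\geq 0$: its off-diagonal entries are $-a_{ij}\geq 0$ and its diagonal entries are $s-a_{ii}\geq 0$. The eigenvalues of $A$ are exactly $s-\nu$ with $\nu$ an eigenvalue of $B$. By Perron--Frobenius, $\rho(B)$ is itself an eigenvalue of $B$, with a non-negative eigenvector $v\geq 0$, $v\neq 0$. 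Everything then reduces to comparing $s$ with $\rho(B)$.

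\emph{(i)$\Rightarrow$(iii).} Suppose $A=sI-B$ with $B\geq0$ and $s>\rho(B)$. The Neumann series
\[
A^{-1}=\frac1s\sum_{k\geq0}\Bigl(\frac{B}{s}\Bigr)^k
\]
converges because $\rho(B/s)<1$. Each term is entry-wise non-negative, so $A^{-1}\geq 0$. This step does not depend on which $s$ is used in the representation.

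\emph{(iii)$\Rightarrow$(ii).} Take the decomposition $A=sI-B$ with $s=\max_i a_{ii}$, and the Perron vector $v\geq0$ with $Bv=\rho(B)v$. Then $Av=(s-\rho(B))v$, and applying $A^{-1}$ gives
\[
v=(s-\rho(B))A^{-1}v.
\]
Here $A^{-1}\geq0$ and $v\geq0$, $v\ne0$, so $A^{-1}v\geq 0$. Moreover $A^{-1}v\neq0$ since $A^{-1}$ is invertible. Comparing a positive entry of $v$ forces $s-\rho(B)>0$. For any eigenvalue $\mu=s-\nu$ of $A$ we then get
\[
\Re\mu\geq s-|\nu|\geq s-\rho(B)>0.
\]

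\emph{(ii)$\Rightarrow$(i).} Again write $A=sI-B$ with $B\geq0$. Since $\rho(B)$ is an eigenvalue of $B$, the number $s-\rho(B)$ is a real eigenvalue of $A$. By (ii) it is positive, so $s>\rho(B)$, which is exactly the definition of a non-singular $M$-matrix.

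The only substantive ingredient is the Perron--Frobenius statement for general, possibly reducible, non-negative matrices: $\rho(B)$ is an eigenvalue with a non-negative eigenvector. I would quote it from \cite{Plemmons1994}. If a self-contained argument were wanted, I would obtain it from the irreducible (positive) case by perturbing $B+\varepsilon J$, with $J$ the all-ones matrix, and passing to the limit $\varepsilon\to0$ using continuity of the spectral radius and compactness of the unit simplex. I expect this limiting step to be the main, though standard, obstacle; the rest is bookkeeping with the shift $s$.
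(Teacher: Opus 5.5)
Your proof is correct. The cycle (i)$\Rightarrow$(iii)$\Rightarrow$(ii)$\Rightarrow$(i), via the Neumann series and the weak Perron--Frobenius theorem applied to $B=sI-A$ with $s=\max_i a_{ii}$, is the standard argument in the M-matrix literature; the paper does not prove this theorem and simply quotes it from there.

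One small simplification: in (iii)$\Rightarrow$(ii) you do not need $A^{-1}v\neq 0$. Evaluating $v=(s-\rho(B))A^{-1}v$ at an index with $v_i>0$, and using $(A^{-1}v)_i\geq 0$, already forces $s-\rho(B)>0$.
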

Second, we recall that an \emph{irreducible matrix} $A$ is a square matrix that cannot be transformed, via a permutation matrix $P$ (specifically $PAP^{T}$), into a block upper triangular matrix with more than one diagonal block of positive size. Equivalently, $A$ is irreducible if its associated directed graph is strongly connected. We refer to \cite[Chapter 2.2.] {Plemmons1994} for more details and properties on irreducible matrices

Finally, we fix notation for vectorisation and Kronecker products. Given $A=(v_1\,|\,\dots\,|\,v_d)\in\mathbb{R}^{d\times d}$, we define
$$
\mathrm{vec}(A):=\begin{pmatrix}v_1\\ \vdots\\ v_d\end{pmatrix}\in\mathbb{R}^{d^2}.
$$
For matrices $A\in\mathbb{R}^{m\times n}$ and $B\in\mathbb{R}^{p\times q}$, we denote by $A\otimes B\in\mathbb{R}^{mp\times nq}$ their \emph{Kronecker product}, defined blockwise by
$$
A\otimes B :=
\begin{pmatrix}
a_{11}B & \cdots & a_{1n}B\\
\vdots  & \ddots & \vdots\\
a_{m1}B & \cdots & a_{mn}B
\end{pmatrix}.
$$
We will repeatedly use the identities
\begin{equation}
\label{eq: kron identities}
(A\otimes B)^T=A^T\otimes B^T,
\qquad
\mathrm{vec}(AXB)=(B^T\otimes A)\,\mathrm{vec}(X),
\end{equation}
as well as bilinearity in each argument.

\subsubsection{Covariance dynamics and Lyapunov–vectorised formulation}
In this subsection we derive the evolution equation satisfied by the covariance matrix of the semi-discrete anomaly process. By vectorising the resulting Lyapunov-type equation, we obtain a linear ODE in $\mathbb{R}^{d^2}$, whose long-time limit gives an explicit representation of the stationary covariance.

First, observe that SDE \eqref{eq: SDE anomaly fd} can be rewritten as
$$
dY_t^\lambda  = M^\lambda Y_t^\lambda \, dt  + \Sigma^\lambda(Y_t ^\lambda)  dW_t, \quad Y ^\lambda|_{t = 0} = y_0.
$$
with $M^\lambda:= A_\Delta - B^\lambda $ and $\Sigma^\lambda (y) = \sqrt{ \tau }\,  \mathrm{diag} (D^\lambda y + f^\lambda)L.$ Then the expected value $\mathbb{E}[Y_t^\lambda]$ satisfies
$$
\frac{d}{dt}\mathbb{E}[Y_t ^\lambda] = M^\lambda \mathbb{E}[Y_t ^\lambda], \qquad \mathbb{E}[Y_0^\lambda] = y_0
$$
The solution is
$$
\mathbb{E}[Y_t ^\lambda] = e^{M^\lambda t} y_0.
$$
It is reasonable, as in Section \ref{subsec: stationary variance and monotone dependence on CO2}, that the temperature anomaly goes to zero as $t \to +\infty $. This holds for any initial condition $y_0$ if $M^\lambda $ is negative definite, an assumption that we will always adopt from now on.

We are interested in the covariance matrix
$$
\Gamma_t^\lambda := \mathbb{E}\bigl[(Y_t^\lambda-\mathbb{E}[Y_t^\lambda]) (Y_t^\lambda-\mathbb{E}[Y_t^\lambda])^T\bigr].
$$
For general $y_0$, the covariance dynamics contain an additional (time-dependent) forcing term through $\mathbb{E}[Y_t^\lambda]=e^{M^\lambda t}y_0$.
To keep the covariance equation autonomous and simplify the subsequent vectorised Lyapunov formulation, we
impose the simplifying assumption $y_0=0$, so that $\mathbb{E}[Y_t^\lambda]=0$ for all $t\geq 0$ and thus
$
\Gamma_t^\lambda=\mathbb{E}\bigl[Y_t^\lambda (Y_t^\lambda)^T\bigr].
$
\begin{lemma}
    Assume that $M^\lambda = A_\Delta -B^\lambda$ is negative definite and $y_0 = 0$. Then $ \Gamma_t^\lambda$ satisfies the matrix ODE
    \begin{equation*}
            \begin{split}
                 \frac{d}{dt} \Gamma_t^\lambda &= M^\lambda \Gamma_t^\lambda + \Gamma_t^\lambda (M^\lambda)^T
+ \mathbb{E}\!\left[\Sigma^\lambda(Y_t^\lambda)\,(\Sigma^\lambda(Y_t^\lambda))^T\right]\\
                 & = M ^\lambda \Gamma_t^\lambda + \Gamma_t^\lambda (M^\lambda)^T + \tau \, \sum_{k=1}^d \mathrm{diag}(l^k) (D^\lambda \Gamma_t^\lambda (D^\lambda)^T + f^\lambda (f^\lambda)^T) \mathrm{diag} (l^k),
            \end{split}
        \end{equation*}
        where $l^k$ is the $k$-th column of $L.$
\end{lemma}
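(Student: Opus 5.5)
The plan is to apply the multidimensional It\^{o} product formula to the matrix-valued process $Y_t^\lambda (Y_t^\lambda)^T$, take expectations so the martingale parts drop out, and then simplify the It\^{o} correction term using the structure $\Sigma^\lambda(y)=\sqrt{\tau}\,\mathrm{diag}(D^\lambda y+f^\lambda)L$ and the fact that $\mathbb{E}[Y_t^\lambda]=0$. Negative definiteness of $M^\lambda$ is not needed for the ODE itself, only for its later long-time analysis; the assumption $y_0=0$ is what matters here, since it gives $\Gamma_t^\lambda=\mathbb{E}[Y_t^\lambda (Y_t^\lambda)^T]$ and kills the cross terms.

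\textbf{Step 1 (It\^{o} product rule).} Componentwise, for $1\le i,j\le d$,
$$
d(Y_{t,i}Y_{t,j}) = Y_{t,i}\,dY_{t,j}+Y_{t,j}\,dY_{t,i}+d[Y_i,Y_j]_t,
\qquad
d[Y_i,Y_j]_t=\bigl(\Sigma^\lambda(Y_t)\Sigma^\lambda(Y_t)^T\bigr)_{ij}\,dt .
$$
In matrix form this reads
$$
d(Y_tY_t^T)=\bigl(M^\lambda Y_tY_t^T+Y_tY_t^T(M^\lambda)^T+\Sigma^\lambda(Y_t)\Sigma^\lambda(Y_t)^T\bigr)dt+dN_t ,
$$
where $N_t$ collects the stochastic integrals $\int Y_{s,i}\,(\Sigma^\lambda(Y_s)dW_s)_j$.

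\textbf{Step 2 (martingale property and expectation).} The coefficients of \eqref{eq: SDE anomaly fd} are affine, hence globally Lipschitz with linear growth. The standard moment estimates therefore give $\mathbb{E}\sup_{s\le t}|Y_s|^4<\infty$ for every $t$. It follows that the integrands of $N$ lie in $L^2(\Omega\times[0,t])$, so $N$ is a true martingale with zero mean. Taking expectations in integral form and differentiating (the integrand is continuous in $s$ in $L^1$) gives the first line of the claim:
$$
\frac{d}{dt}\Gamma_t^\lambda=M^\lambda\Gamma_t^\lambda+\Gamma_t^\lambda (M^\lambda)^T+\mathbb{E}\bigl[\Sigma^\lambda(Y_t)\Sigma^\lambda(Y_t)^T\bigr].
$$
I expect this justification to be the only genuinely delicate point, and it is handled by the moment bound above.

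\textbf{Step 3 (algebra of the noise term).} Set $v=D^\lambda y+f^\lambda$ and write $LL^T=\sum_k l^k (l^k)^T$. Then
$$
\Sigma\Sigma^T=\tau\sum_k \mathrm{diag}(v)\,l^k(l^k)^T\,\mathrm{diag}(v).
$$
Diagonal matrices acting on vectors commute in the sense that $\mathrm{diag}(v)\,l^k=\mathrm{diag}(l^k)\,v$. This turns each summand into $\mathrm{diag}(l^k)\,vv^T\,\mathrm{diag}(l^k)$. Finally,
$$
\mathbb{E}[vv^T]=D^\lambda\Gamma_t^\lambda (D^\lambda)^T+D^\lambda\mathbb{E}[Y_t](f^\lambda)^T+f^\lambda\mathbb{E}[Y_t]^T(D^\lambda)^T+f^\lambda (f^\lambda)^T .
$$
The middle two terms vanish because $\mathbb{E}[Y_t]=e^{M^\lambda t}y_0=0$. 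Pulling the deterministic matrices $\mathrm{diag}(l^k)$ out of the expectation yields the second line of the stated ODE.
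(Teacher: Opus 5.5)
Your proposal is correct and follows the same route as the paper: It\^{o}'s formula for $Y_tY_t^T$, then taking expectations to remove the martingale part, then rewriting $\Sigma\Sigma^T$ via $LL^T=\sum_k l^k(l^k)^T$ and the identity $\mathrm{diag}(v)\,l^k=\mathrm{diag}(l^k)\,v$, with $\mathbb{E}[Y_t]=0$ removing the cross terms. Your moment-bound argument that the stochastic integrals are true martingales, and your remark that negative definiteness of $M^\lambda$ is not used here, fill in points the paper's proof takes for granted.
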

\begin{proof}
    Assume for simplicity $\tau = 1$. We drop for the ease of notation the dependence on $\lambda$. Using It\^{o}-formula
    \begin{equation*}
        \begin{split}
            d (Y_t Y_t^T) &= (dY_t)(Y_t)^T + Y_t(dY_t)^T + (dY_t)(dY_t)^T  \\
            & = (MY_t dt + \Sigma  dW_t) (Y_t)^T + Y_t( MY_tdt+ \Sigma  dW_t)^T + (dY_t)(dY_t)^T.
        \end{split}
    \end{equation*}
    Note that, using It\^{o} isometry, we have
    \begin{equation*}
        \begin{split}
            (dY_t)(dY_t)^T =(MY_tdt+ \Sigma  dW_t) (MY_tdt+ \Sigma  dW_t)^T = \Sigma  dW_t (dW_t)^T \Sigma^T.
        \end{split}
    \end{equation*}
    Since $dW_t (dW_t)^T = I dt$, we obtain
    $$
    (dY_t)(dY_t)^T = \Sigma \Sigma^T dt.
    $$
    In conclusion we have
    $$
    d(Y_t Y_t^T) = [M Y_t(Y_t)^T  + Y_t (Y_t)^T M^T + \Sigma \Sigma^T]dt + \widetilde M_t,
    $$
    where $\widetilde M_t$ denotes a martingale. Taking the expected values, we deduce
    $$
    \frac{d}{dt} \Gamma_t = M  \Gamma_t + \Gamma_t  M^T + \mathbb{E}[\Sigma \Sigma^T].
    $$
    But in our setting $\Sigma = \mathrm{diag} (DY_t +f)L$, thus
    $$
    \Sigma \Sigma^T = \mathrm{diag} (DY_t +f) L L^T \mathrm{diag}( DY_t +f)^T.
    $$
    Since
    $$
    L L^T = \sum_{k =1 }^d l^k (l^k)^T,
    $$
    we obtain
    $$
    \Sigma \Sigma^T = \sum_{k = 1}^d \mathrm{diag} ( DY_t + f) l^k (l^k)^T \mathrm{diag} (DY_t +f)^T,
    $$
    Further, using the property
    $$
    \mathrm{diag} (a) b = \mathrm{diag}(b)a, \quad a,b \in \mathbb{R}^d,
    $$
    we have
    $$
    \Sigma \Sigma^T = \sum_{k =1 }^d \mathrm{diag} (l^k ) \left[(DY_t +f) (DY_t +f)^T \right] \mathrm{diag}(l^k).
    $$
    Since $\mathbb{E}[Y_t] = 0$, we have
    $$
    \mathbb{E}\left[(DY_t +f) (DY_t+f)^T \right] = D \Gamma_t  D^T + f f^T,
    $$
    and thus
    $$
    \mathbb{E}\left[\Sigma \Sigma^T \right] = \sum_{k = 1}^d \mathrm{diag}(l^k) (D \Gamma_t  D^T + f f^T) \mathrm{diag} (l^k).
    $$
\end{proof}
In the following technical result, we write the ODE satisfied by the covariance matrix $\Gamma_t $ in more tractable way.
\begin{lemma}
Assume that $M ^\lambda= A_\Delta -B^\lambda$ is negative definite and $y_0 = 0$. The ODE for $\Gamma_t ^\lambda$
$$
 \frac{d}{dt} \Gamma_t^\lambda  = M^\lambda \Gamma_t ^\lambda + \Gamma_t ^\lambda (M^\lambda)^T + \tau \, \sum_{k=1}^d \mathrm{diag}(l^k) (D^\lambda \Gamma_t^\lambda  (D^\lambda)^T + f^\lambda (f^\lambda)^T) \mathrm{diag} (l^k),
$$
is equivalent to
\begin{equation}
\frac{d}{dt} q^\lambda _t = K^\lambda q_t^\lambda  + F^\lambda,
    \label{eq: ODE for covariance matrix}
\end{equation}
with
$$
q_t^\lambda = \mathrm{vec}(\Gamma_t^\lambda ), \quad K^\lambda := I_d \otimes M^\lambda + M^\lambda \otimes I_d + \tau \,  \mathrm{diag} (\mathrm{vec}(C)) (D^\lambda \otimes D^\lambda),
$$
and
$$
F^\lambda = \tau \,  \mathrm{diag} (\mathrm{vec} (C)) \mathrm{vec}(f^\lambda (f^\lambda)^T),
$$
\label{lemma: vectorisation problem in Rd}
\end{lemma}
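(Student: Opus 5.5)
The plan is to apply $\mathrm{vec}$ to both sides of the matrix ODE and treat the three contributions separately, using the identities \eqref{eq: kron identities}. Vectorisation is linear and commutes with $\frac{d}{dt}$, so the left-hand side becomes $\frac{d}{dt}q_t^\lambda$. For the linear drift part,
$$
\mathrm{vec}(M^\lambda\Gamma_t^\lambda I_d)=(I_d\otimes M^\lambda)q_t^\lambda,\qquad \mathrm{vec}(I_d\Gamma_t^\lambda (M^\lambda)^T)=(M^\lambda\otimes I_d)q_t^\lambda,
$$
which gives the first two summands of $K^\lambda$.

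The main work is the noise term. For a fixed matrix $X\in\mathbb{R}^{d\times d}$ we have $(\mathrm{diag}(l^k)X\,\mathrm{diag}(l^k))_{ij}=l^k_i X_{ij} l^k_j$. Summing over $k$ gives
$$
\sum_k l^k_i l^k_j X_{ij}=(LL^T)_{ij}X_{ij}=C_{ij}X_{ij}.
$$
Hence $\sum_k \mathrm{diag}(l^k)X\,\mathrm{diag}(l^k)=C\circ X$, the Hadamard product. Vectorising a Hadamard product gives $\mathrm{vec}(C\circ X)=\mathrm{diag}(\mathrm{vec}(C))\,\mathrm{vec}(X)$, because both sides multiply entries at the same positions. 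This is the step where care with indices is needed. I will check it entrywise rather than through Kronecker identities, since $\mathrm{diag}(l^k)\otimes\mathrm{diag}(l^k)$ summed over $k$ is exactly $\mathrm{diag}(\mathrm{vec}(C))$.

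Next I apply this with $X=D^\lambda\Gamma_t^\lambda(D^\lambda)^T+f^\lambda(f^\lambda)^T$. Using \eqref{eq: kron identities} once more,
$$
\mathrm{vec}\bigl(D^\lambda\Gamma_t^\lambda (D^\lambda)^T\bigr)=(D^\lambda\otimes D^\lambda)q_t^\lambda,
$$
since $((D^\lambda)^T)^T=D^\lambda$. Multiplying by $\tau$, the $\Gamma$-dependent part yields $\tau\,\mathrm{diag}(\mathrm{vec}(C))(D^\lambda\otimes D^\lambda)q_t^\lambda$, which completes $K^\lambda$. The constant part yields $F^\lambda=\tau\,\mathrm{diag}(\mathrm{vec}(C))\mathrm{vec}(f^\lambda(f^\lambda)^T)$.

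Finally, $\mathrm{vec}$ is a linear bijection $\mathbb{R}^{d\times d}\to\mathbb{R}^{d^2}$. Each step above is therefore reversible, and this gives the claimed equivalence of the two ODEs, with corresponding initial data $q_0^\lambda=\mathrm{vec}(\Gamma_0^\lambda)=0$.
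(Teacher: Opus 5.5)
Your proposal is correct and follows essentially the same route as the paper: vectorise term by term, use $\mathrm{vec}(AXB)=(B^T\otimes A)\mathrm{vec}(X)$ for the drift and for $D^\lambda\Gamma_t^\lambda(D^\lambda)^T$, and identify the summed noise operator with $\mathrm{diag}(\mathrm{vec}(C))$. Your entrywise Hadamard check, $\sum_k \mathrm{diag}(l^k)X\,\mathrm{diag}(l^k)=C\circ X$, is a slightly more explicit version of the paper's justification that $\sum_k \mathrm{diag}(l^k)\otimes\mathrm{diag}(l^k)=\mathrm{diag}(\mathrm{vec}(C))$.
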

\begin{proof}
    Assume for simplicity $\tau  = 1$. We drop the dependence on $\lambda$ for the ease of notation. Consider the matrix ODE
$$
\frac{d}{dt} \Gamma_t  = M \Gamma_t  + \Gamma_t   M^T + \sum_{k=1}^d \mathrm{diag}(l^k) \left(D \Gamma_t  D^T + f f^T\right) \mathrm{diag}(l^k).
$$
Vectorising both sides gives
$$
\frac{d}{dt} q_t = \mathrm{vec}(M\Gamma_t ) + \mathrm{vec}(\Gamma_t  M^T)
          + \sum_{k=1}^d \mathrm{vec} \left(\mathrm{diag}(l^k)  \left(D \Gamma_t  D^T + f f^T\right) \mathrm{diag}(l^k)\right).
$$
By the Kronecker product property \eqref{eq: kron identities},
$$
\mathrm{vec}(M\Gamma_t ) = (I\otimes M)q_t,
\qquad
\mathrm{vec}(\Gamma_t  M^T) = (M\otimes I) q_t.
$$
For the noise term, again by \eqref{eq: kron identities},
since $\mathrm{diag}(l^k)^T=\mathrm{diag}(l^k)$,
$$
\mathrm{vec} \left(\mathrm{diag}(l^k) X   \mathrm{diag}(l^k)\right)
= \left(\mathrm{diag}(l^k)\otimes \mathrm{diag}(l^k)\right) \mathrm{vec}(X),
\qquad X\in\mathbb{R}^{d\times d}.
$$
Apply this with $X = D \Gamma_t  D^T + f f^T$ to obtain
$$
\sum_{k=1}^d  \mathrm{vec} \left(\mathrm{diag}(l^k) (D \Gamma_t  D^T + f f^T) \mathrm{diag}(l^k)\right)
= \left(\sum_{k=1}^d \mathrm{diag}(l^k)\otimes \mathrm{diag}(l^k)\right) 
   \mathrm{vec}(D \Gamma_t  D^T + f f^T).
$$
Finally,
$$
\mathrm{vec}(D \Gamma_t  D^T) = (D\otimes D)\, \mathrm{vec}(\Gamma_t ) = (D\otimes D)\,q_t,
\qquad
\mathrm{vec}(f f^T) \ \text{is constant}.
$$
Finally, since $C=LL^T$ and $l^k$ denotes the $k$-th column of $L$, we have
$$
\sum_{k=1}^d  \mathrm{diag}(l^k)\otimes  \mathrm{diag}(l^k)= \mathrm{diag}( \mathrm{vec}(C)),
$$
because the $(i,j)$ entry of $C$ is $C_{ij}=\sum_{k=1}^d l_i^k l_j^k$, and vectorisation turns
entrywise multiplication by $C$ into multiplication by $ \mathrm{diag}( \mathrm{vec}(C))$.
Collecting terms yields
$$
\frac{d}{dt} q_t = \left(I_d\otimes M + M\otimes I_d +  \mathrm{diag}( \mathrm{vec}(C)) (D\otimes D)\right) q_t +\mathrm{diag}( \mathrm{vec}(C))  \mathrm{vec}(f f^T),
$$
which is \eqref{eq: ODE for covariance matrix}.
\end{proof}
Combining the covariance identity from the previous lemma with the vectorisation rule in Lemma \ref{lemma: vectorisation problem in Rd}, we reduce the matrix-valued covariance dynamics to the linear ODE \eqref{eq: ODE for covariance matrix} on $q_t^\lambda = vec (\Gamma_t^\lambda )$; this directly gives the existence and an explicit formula for the stationary covariance under the hypothesis that $K$ is negative-definite, and $C = L L^T$ is non-negative entry-wise.
\begin{corollary}
    Assume that $M^\lambda= A_\Delta -B^\lambda$ is negative definite, $y_0 = 0$ and $$
    K^\lambda= I_d \otimes M^\lambda + M ^\lambda\otimes I_d + \tau \, \mathrm{diag} (\mathrm{vec}(C)) (D^\lambda \otimes D^\lambda)
    $$
    is negative definite.  Assume further that $C = L L^T \geq0 $ component-wise. Then $\lim_{t \to \infty } \Gamma_t^\lambda = :\Gamma_\infty^\lambda$ exists. Further, setting $q^\lambda = \mathrm{vec}(\Gamma_\infty^\lambda)$, we have
    $$
     q ^\lambda= (-K^\lambda)^{-1}F^\lambda,
    $$
    Moreover $(-K^\lambda)^{-1} \geq 0$ component-wise.
    \label{cor: non negative  minus inverse K}
\end{corollary}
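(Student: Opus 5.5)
The plan has three parts: solve the linear ODE \eqref{eq: ODE for covariance matrix} explicitly, pass to the limit using that $K^\lambda$ is negative definite, and prove $(-K^\lambda)^{-1}\ge 0$ via Theorem~\ref{thm: characterisation invertible M matrix}.

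\textbf{Existence and formula.} By Lemma~\ref{lemma: vectorisation problem in Rd}, $q_t^\lambda=\mathrm{vec}(\Gamma_t^\lambda)$ solves $\frac{d}{dt}q_t^\lambda=K^\lambda q_t^\lambda+F^\lambda$ with $q_0^\lambda=0$, since $y_0=0$. Variation of constants gives
$$
q_t^\lambda=\int_0^t e^{K^\lambda s}F^\lambda\,ds .
$$
Negative definiteness of $K^\lambda$ means $x^TK^\lambda x<0$ for all $x\neq0$. For an eigenpair $(\mu,v)$ this gives $\Re(\mu)\,|v|^2=\Re(\bar v^TK^\lambda v)<0$, since the real symmetric part of $K^\lambda$ is negative definite. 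Hence every eigenvalue of $K^\lambda$ has negative real part, and $\|e^{K^\lambda s}\|\le Ce^{-\kappa s}$ for some $\kappa>0$. The integral therefore converges as $t\to\infty$, and
$$
\int_0^\infty e^{K^\lambda s}\,ds=(-K^\lambda)^{-1}.
$$
This yields $q^\lambda=(-K^\lambda)^{-1}F^\lambda$. Since $\mathrm{vec}$ is a linear isomorphism, $\Gamma_t^\lambda\to\Gamma_\infty^\lambda$ exists with $\mathrm{vec}(\Gamma_\infty^\lambda)=q^\lambda$.

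\textbf{Sign of the inverse.} I will check that $-K^\lambda$ is a $Z$-matrix and then apply Theorem~\ref{thm: characterisation invertible M matrix}, (ii)$\Rightarrow$(iii). The eigenvalue condition (ii) for $-K^\lambda$ follows from the previous paragraph. For the $Z$-property, it suffices that every off-diagonal entry of $K^\lambda$ is nonnegative; I expect this to be the main step. I treat the three summands separately.
\begin{itemize}
\item $M^\lambda=A_\Delta-B^\lambda$ has off-diagonal entries equal to those of $A_\Delta$, namely $1/h_x^2$ and $1/h_y^2$ for neighbouring nodes and $0$ otherwise, all nonnegative. $B^\lambda$ is diagonal and does not contribute.
\item $I_d\otimes M^\lambda$ is block diagonal with blocks $M^\lambda$, so its off-diagonal entries are those of $M^\lambda$ or zero. $M^\lambda\otimes I_d$ has entries $M^\lambda_{ij}\delta_{kl}$, which are off-diagonal only when $i\ne j$ or $k\neq l$, and are then either $0$ or an off-diagonal entry of $M^\lambda$. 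Both are therefore nonnegative off the diagonal.
\item $\tau\,\mathrm{diag}(\mathrm{vec}(C))(D^\lambda\otimes D^\lambda)$ is diagonal, because $D^\lambda\otimes D^\lambda$ is diagonal, so it affects only diagonal entries.
\end{itemize}
Thus $K^\lambda$ is a Metzler matrix and $-K^\lambda$ is a $Z$-matrix. Since all eigenvalues of $-K^\lambda$ have positive real part, Theorem~\ref{thm: characterisation invertible M matrix} gives that $-K^\lambda$ is a nonsingular $M$-matrix with $(-K^\lambda)^{-1}\ge0$ component-wise.

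As a cross-check, the Metzler property gives $e^{K^\lambda s}\ge0$ directly. This follows by writing $K^\lambda=K_{\mathrm{diag}}+N$ with $N\ge0$, noting that $e^{(K_{\mathrm{diag}}+cI)s}$ is diagonal and nonnegative, and using $e^{Ns}\ge0$ together with the series and Trotter product. Integrating $e^{K^\lambda s}$ then confirms $(-K^\lambda)^{-1}\ge0$.

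Finally, the hypothesis $C\ge0$ is not needed for the inverse itself. It gives $F^\lambda=\tau\,\mathrm{diag}(\mathrm{vec}(C))\,\mathrm{vec}(f^\lambda(f^\lambda)^T)\ge0$, because $\beta>0$ implies $f^\lambda>0$. Hence $q^\lambda\ge0$, which I would record as a remark for later use in the monotonicity argument.
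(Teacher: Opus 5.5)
Your proposal is correct and follows essentially the same route as the paper. It obtains convergence of the linear ODE for $q_t^\lambda$ from negative definiteness of $K^\lambda$, shows summand by summand that $-K^\lambda$ is a $Z$-matrix, and then applies Theorem~\ref{thm: characterisation invertible M matrix}. Your version is more explicit (variation of constants, and the eigenvalue argument giving $\int_0^\infty e^{K^\lambda s}\,ds=(-K^\lambda)^{-1}$). You are also right that $C\geq 0$ is not needed for the $Z$-property: a diagonal term cannot change off-diagonal signs. The paper invokes $C\geq0$ at that step anyway, although it only matters later for $F^\lambda\geq 0$.
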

\begin{proof}
Assume for simplicity $\tau = 1$. We drop the dependence on $\lambda$ for the ease of notation. The first part of the statement consists in observing that the solution of the ODE \eqref{eq: ODE for covariance matrix} converges if $K$ is negative definite.

Let's now check that, if $K$ is negative definite, then $(-K)^{-1} \geq 0$ component-wise. Write
$$
- K = I_d \otimes (-M) + (-M) \otimes I_d - \mathrm{diag} ( \mathrm{vec}(C)) (D \otimes D).
$$
We claim that $-K$ is a $Z$-matrix, i.e. it has non-positive off-diagonal elements.

First, observe that $-M$ is a $Z$-matrix. Indeed, the discrete Laplacian $A_\Delta$ has non-negative off-diagonal entries and $B $ is diagonal, hence $M = A_\Delta - B$ has non-negative off-diagonals. Therefore $-M$ has non-positive off-diagonals, i.e. $-M$ is a Z-matrix. 

Second, both Kronecker products $I_d \otimes  (-M)$ and $(-M) \otimes  I_d$ have the same sign pattern off-diagonal as $-M$, therefore each of them is a $Z$-matrix. The same holds for their sum

Third, the term $\mathrm{diag} (\mathrm{vec}(C)) ( D \otimes D)$ is diagonal and non-negative. Indeed, since $D$ is diagonal, then $D \otimes D$ is diagonal. Hence $\mathrm{diag} (\mathrm{vec} (C)) ( D \otimes D)$ is diagonal as well. Moreover, $C \geq 0$ entry-wise implies $\mathrm{diag} (\mathrm{vec} (C)) \geq 0$ entry-wise; therefore  $ \mathrm{diag} (\mathrm{vec}(C)) (D \otimes D)$ has non-negative diagonal entries. Consequently,
$\mathrm{diag}(\mathrm{vec}(C))(D\otimes D)$ only affects the diagonal of $-K$ and does not change any
off-diagonal sign.

In conclusion, we have proved that $-K$ is a $Z$-matrix. By the hypothesis on $K$, we have that $-K$ is positive definite. By Theorem \ref{thm: characterisation invertible M matrix}, we deduce that $(-K)^{-1} \geq 0$ component-wise.

\end{proof}

\subsection{Monotonicity of the stationary covariance in the radiative forcing parameter}
\label{subsec: monotone covariance}

We now study how the stationary covariance of the semi-discrete temperature anomaly depends on the forcing
parameter $\lambda$. For each $\lambda$, let $Y_t^\lambda\in\mathbb{R}^d$ solve the finite-difference SDE
\eqref{eq: SDE anomaly fd} with $y_0=0$, i.e.
\begin{equation*}
dY_t^\lambda = M^\lambda Y_t^\lambda\,dt
+ \sqrt{\tau}\, \mathrm{diag} \bigl(D^\lambda Y_t^\lambda + f^\lambda\bigr)\,L\,dW_t,
\qquad
Y_0^\lambda=0,
\end{equation*}
where
$$
M^\lambda := A_\Delta - B^\lambda,
\qquad
B^\lambda := \mathrm{diag}(b^\lambda),\quad D^\lambda := \mathrm{diag}(d^\lambda),\quad f^\lambda\in\mathbb{R}^d,
\quad C:=LL^T .
$$
Recall that the coefficients are obtained by sampling the equilibrium profile $T^{*,\lambda}$ on the interior
grid points $z_m$:
\begin{equation}
\label{eq:coeffs-sampled}
b_m^\lambda = a^\lambda(z_m)
= - Q(z_m) \beta' \left(T^{*,\lambda}(z_m)\right) + r_1,
\quad
d_m^\lambda = \beta' \left(T^{*,\lambda}(z_m)\right),
\qquad
f_m^\lambda = \beta \left(T^{*,\lambda}(z_m)\right).
\end{equation}
Here we used the affine form $\mathcal R_e(T)=r_0+r_1T$ so that $\mathcal R_e'(T)\equiv r_1$. Recall that, under the non-restrictive assumption $y_0 =0$,
$$
\Gamma_t ^\lambda = \mathbb{E}\left[Y_t^\lambda (Y_t^\lambda)^T\right],
\qquad
\Gamma _\infty^\lambda = \lim_{t\to\infty} \Gamma_t ^\lambda,
$$
whenever the limit exists. By Lemma~\ref{lemma: vectorisation problem in Rd} and
Corollary~\ref{cor: non negative  minus inverse K}, under the stability assumptions on $M^\lambda$ and $K^\lambda$
we have
\begin{equation*}
q_\infty^\lambda := \mathrm{vec}(\Gamma _\infty^\lambda) = (-K^\lambda)^{-1}F^\lambda,
\end{equation*}
where
\begin{equation}
\label{eq:KFlambda}
\quad
F^\lambda = \tau\,\mathrm{diag}(\mathrm{vec}(C)) \, \mathrm{vec}(f^\lambda (f^\lambda)^T).
\end{equation}

As in the zero-dimensional case, one expects $\lambda\mapsto T^{*,\lambda}(x)$ to be increasing for each $x\in \mathcal{D}$
(see Section~\ref{sec: one}). Since, for $T \in (T_l, T_u),$ $\beta$ is increasing, this implies that $\lambda\mapsto f^\lambda$ is
increasing component-wise.

To control the $\lambda$-dependence of $B^\lambda = \mathrm{diag}(b^\lambda)$ and $D^\lambda =\mathrm{diag} ((\beta'(T^{*,\lambda}(x_i))_i)) $, we assume, as above, that the equilibrium temperatures
remain in a regime where $\beta$ is increasing, e.g.
\begin{equation}
\label{eq:Tref-assumption}
T^{*,\lambda}(z_m) \in (T_l, T_u),\qquad m=1,\dots,d,
\end{equation}
so that $\partial_ \lambda b^\lambda=0$ and
$\beta'(T^{*,\lambda})\equiv \tfrac{\beta_{\max} - \beta_{\min}}{T_u - T_l}$.
\begin{proposition}[Strict component-wise increase of the stationary covariance]
\label{prop:monotone-covariance}
Let
$$
s:=\frac{\beta_{\max}-\beta_{\min}}{T_u-T_l}>0 .
$$
Assume that, for every $\lambda$ under consideration, the equilibrium profile satisfies
$$
T^{*,\lambda}(x)\in(T_l,T_u)\qquad \text{for all }x\in\mathcal D,
$$
so that $\beta'(T^{*,\lambda}(x))\equiv s$ on $\mathcal D$.
Assume moreover the coercivity condition
\begin{equation}
\label{eq:coercivity}
r_1-Q(x) s \geq 0\qquad\text{in }\mathcal D.
\end{equation}
Let $C=LL^T$ and assume
$$
C\geq 0\ \text{entry-wise},\qquad C\neq 0.
$$
Assume that $K^\lambda$ is negative definite, where
$$
M^\lambda:=A_\Delta-B^\lambda,
\qquad
K^\lambda:= I_d\otimes M^\lambda + M^\lambda\otimes I_d
+\tau \,\mathrm{diag}(\mathrm{vec}(C))\,(D^\lambda\otimes D^\lambda).
$$
Then
$
\partial_\lambda \Gamma_\infty^\lambda > 0$ entry-wise.
\end{proposition}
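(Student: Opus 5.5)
The approach is to exploit the fact that, under the standing assumption $T^{*,\lambda}(x)\in(T_l,T_u)$, the matrix $K^\lambda$ does not depend on $\lambda$. As a result, all the $\lambda$-dependence of $q_\infty^\lambda=\mathrm{vec}(\Gamma_\infty^\lambda)=(-K^\lambda)^{-1}F^\lambda$ sits in the forcing term $F^\lambda$. Concretely, $\beta'(T^{*,\lambda}(z_m))\equiv s$ gives $D^\lambda=sI_d$ and $b^\lambda_m=r_1-Q(z_m)s$, so $B^\lambda$, $M^\lambda$ and $K^\lambda=:K$ are constant in $\lambda$. Hence
$$
\partial_\lambda q_\infty^\lambda=(-K)^{-1}\,\partial_\lambda F^\lambda,\qquad \partial_\lambda F^\lambda=\tau\,\mathrm{diag}(\mathrm{vec}(C))\,\mathrm{vec}\bigl(\partial_\lambda f^\lambda (f^\lambda)^T+f^\lambda(\partial_\lambda f^\lambda)^T\bigr).
$$
Corollary \ref{cor: non negative  minus inverse K} already gives $(-K)^{-1}\geq 0$. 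So it remains to show two things: $\partial_\lambda F^\lambda\geq0$ with $\partial_\lambda F^\lambda\neq0$, and that $(-K)^{-1}$ is in fact strictly positive.

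\textbf{Step 1 (monotonicity of the equilibrium profile).} This is the spatial analogue of Step 1 in Proposition \ref{prop: variance derivative}. Differentiate \eqref{eq: elliptic equilibrium dirichlet} in $\lambda$. Since $\theta$ is independent of $\lambda$ and $\beta$ is affine with slope $s$ on the relevant range, $u:=\partial_\lambda T^{*,\lambda}$ solves
$$
-\Delta u+(r_1-Qs)\,u=1 \ \text{ in }\mathcal D,\qquad u=0\ \text{ on }\partial\mathcal D.
$$
The existence and differentiability of $\lambda\mapsto T^{*,\lambda}$ follow from the implicit function theorem, because $-\Delta+(r_1-Qs)$ is invertible with Dirichlet data by the coercivity condition \eqref{eq:coercivity}. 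Since the zeroth-order coefficient $r_1-Qs$ is nonnegative and the source term is positive, the weak and then the strong maximum principle give $u>0$ in $\mathcal D$. Consequently $\partial_\lambda f^\lambda_m=s\,u(z_m)>0$ for every interior node. If one prefers to work purely on the grid, the same argument applies to $(-A_\Delta+B)\partial_\lambda T^*_h=\mathbf 1$: here $-A_\Delta+B$ is a nonsingular, irreducible $M$-matrix, so its inverse is strictly positive.

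\textbf{Step 2 (sign of the forcing derivative).} Since $f^\lambda_m=\beta(T^{*,\lambda}(z_m))\geq\beta_{\min}>0$, every entry $s\,(u_if_j+f_iu_j)$ of $\partial_\lambda(f^\lambda(f^\lambda)^T)$ is strictly positive. Multiplying by $\tau C_{ij}\geq0$ gives $\partial_\lambda F^\lambda\geq 0$. It is also nonzero, because $C\neq0$; in particular, $C$ positive semidefinite and nonzero forces some $C_{ii}>0$.

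\textbf{Step 3 (strictness; the main obstacle).} Nonnegativity of $(-K)^{-1}$ only yields $\partial_\lambda\Gamma^\lambda_\infty\geq0$. For the strict inequality in every entry I would show that $-K$ is an \emph{irreducible} nonsingular $M$-matrix, which implies $(-K)^{-1}>0$ entrywise (\cite{Plemmons1994}). Then $(-K)^{-1}$ applied to a nonnegative, nonzero vector is strictly positive in every component. Irreducibility is the delicate point, and the argument goes as follows. The off-diagonal pattern of $-K$ equals that of the Kronecker sum $I_d\otimes(-M)+(-M)\otimes I_d$, since the $C$-term is diagonal. The off-diagonal pattern of $-M$ is that of the five-point stencil, whose graph on the interior nodes is connected. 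The graph of the Kronecker sum is the Cartesian product of two connected graphs, and is therefore connected. Hence $-K$ is irreducible.

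Finally, un-vectorising gives $\partial_\lambda\Gamma^\lambda_\infty>0$ entrywise. Taking traces then shows that $\mathrm{Var}_{\mathrm{sp},\infty}$ is strictly increasing in $\lambda$.
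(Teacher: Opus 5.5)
Your proposal is correct and follows essentially the same route as the paper. In both arguments, $K^\lambda$ is $\lambda$-independent because $\beta'\equiv s$, and the maximum principle for $-\Delta u+(r_1-Qs)u=1$ gives $\partial_\lambda f^\lambda>0$, hence $\partial_\lambda F^\lambda\geq 0$ with $\partial_\lambda F^\lambda\neq 0$. Strictness then comes from $-K$ being an irreducible nonsingular $M$-matrix, with irreducibility obtained from connectivity of the Kronecker-sum graph, so that $(-K)^{-1}>0$ entry-wise. Your extra remarks are harmless refinements of the same proof: the invertibility of $-\Delta+(r_1-Qs)$ to justify differentiability in $\lambda$, the purely discrete variant, and the observation that some $C_{ii}>0$.
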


\begin{proof}
\emph{Step 1: $\partial_\lambda T^{*,\lambda}>0$ in the interior.} Set $u^\lambda:=\partial_\lambda T^{*,\lambda}$. Differentiating the elliptic equilibrium problem
\eqref{eq: elliptic equilibrium dirichlet} with respect to $\lambda$ yields
$$
\Delta u^\lambda(x) + Q(x)\beta'(T^{*,\lambda}(x)) u^\lambda(x) + 1 -r_1 u^\lambda(x)=0,
\qquad
u^\lambda|_{\partial\mathcal D}=0.
$$
Since $T^{*,\lambda}(x)\in(T_l,T_u)$ for all $x$ and $\beta$ is affine on $(T_l,T_u)$, we have
$\beta'(T^{*,\lambda}(x))\equiv s$. Hence
$$
-\Delta u^\lambda(x) + (r_1-Q(x)s) u^\lambda(x) = 1,
\qquad
u^\lambda|_{\partial\mathcal{D}}=0.
$$
By \eqref{eq:coercivity} and the maximum principle, see \cite[Section 6.4]{Evans2022}, it follows that
$$
u^\lambda(x)>0\quad \text{in } \mathcal D,
$$
and in particular $u^\lambda(z_m)>0$ for all interior grid points $z_m$.

\emph{Step 2: $\partial_\lambda f^\lambda>0$ component-wise.}
Recall $f_m^\lambda=\beta(T^{*,\lambda}(z_m))$. Since $\beta'(T^{*,\lambda}(z_m))\equiv s$,
$$
\partial_\lambda f_m^\lambda
=\beta'(T^{*,\lambda}(z_m)) \partial_\lambda T^{*,\lambda}(z_m)
= s u^\lambda(z_m) > 0,
\qquad m=1,\dots,d.
$$
Also $f^\lambda>0$ component-wise since $\beta\in(0,1)$.

\emph{Step 3: $K^\lambda$ is $\lambda$-independent on $(T_l,T_u)$.}
Because $\beta'(T^{*,\lambda}(z_m))\equiv s$, we have $D^\lambda=sI$ and
$B^\lambda=\mathrm{diag}(-Q(z_m)s+r_1)$, hence $D^\lambda, M^\lambda$ and $K^\lambda$ do not depend on $\lambda$
in this regime. We denote them simply by $D, M$ and $K$.

\emph{Step 4: $-K$ is an irreducible nonsingular $M$-matrix and $(-K)^{-1}>0$ entry-wise.}
Since $A_\Delta$ is the standard finite-difference Laplacian (five-point stencil) on a connected interior grid,
its directed graph is strongly connected, hence $A_\Delta$ is irreducible. As $B^\lambda$ is diagonal,
$M^\lambda=A_\Delta-B^\lambda$ has the same off-diagonal pattern as $A_\Delta$, so $-M$ is an irreducible $Z$-matrix. Consider the Kronecker sum
$$
S:= I_d\otimes(-M)+(-M)\otimes I_d.
$$
Then $S$ is a $Z$-matrix and is irreducible\footnote{Indeed, write the index set of $\R^{d^2}$ as pairs
$(i,j)$ with $i,j\in\{1,\dots,d\}$. By definition of the Kronecker sum,
$S^\lambda$ allows moves that change \emph{only one coordinate at a time}:
from $(i,j)$ to $(i',j)$ whenever $(-M)_{ii'}\neq 0$, and from $(i,j)$ to $(i,j')$ whenever $(-M)_{jj'}\neq 0$.
Since $-M$ is irreducible, its graph is strongly connected, hence any pair $(i,j)$ can reach any other pair
$(i',j')$ by first moving in the $i$-coordinate and then in the $j$-coordinate. Therefore $S$ is irreducible.}. Moreover, since $D$ is diagonal, $D\otimes D$ is diagonal, hence
$\mathrm{diag}(\mathrm{vec}(C))(D\otimes D)$ is diagonal. Therefore
$$
-K = S - \tau\,\mathrm{diag}(\mathrm{vec}(C))(D\otimes D)
$$
is still a $Z$-matrix and remains irreducible (diagonal modifications do not affect irreducibility). By assumption, $K$ is negative definite, hence $-K$ is positive definite and in particular all eigenvalues
of $-K$ have strictly positive real part. Since $-K$ is a $Z$-matrix, this implies that $-K$ is a nonsingular
$M$-matrix. Together with irreducibility, it follows, thanks to \cite[Corollary 3.8, Chapter 5]{Plemmons1981}, that
$$
(-K)^{-1}>0 \qquad\text{entry-wise.}
$$

\emph{Step 5: differentiate the stationary covariance formula and conclude strict positivity.}
By Lemma \ref{lemma: vectorisation problem in Rd} and Corollary \ref{cor: non negative  minus inverse K},
$$
q_\infty^\lambda:=\mathrm{vec}(\Gamma_\infty^\lambda)=(-K)^{-1}F^\lambda,
\qquad
F^\lambda=\tau\,\mathrm{diag}(\mathrm{vec}(C)) \mathrm{vec}(f^\lambda(f^\lambda)^T).
$$
Since $K$ is constant in $\lambda$, differentiation gives
$$
\partial_\lambda q_\infty^\lambda = (-K)^{-1}\,\partial_\lambda F^\lambda,
$$
and
$$
\partial_\lambda F^\lambda
=\tau\,\mathrm{diag}(\mathrm{vec}(C))\,\mathrm{vec}((\partial_\lambda f^\lambda)(f^\lambda)^T
+ f^\lambda(\partial_\lambda f^\lambda)^T).
$$
By Step 2, both $f^\lambda$ and $\partial_\lambda f^\lambda$ are strictly positive component-wise, hence the matrix
$$
G^\lambda := (\partial_\lambda f^\lambda)(f^\lambda)^T + f^\lambda(\partial_\lambda f^\lambda)^T
$$
is strictly positive entry-wise, so $\mathrm{vec}(G^\lambda)>0$ component-wise.
Since $C\geq 0$ entry-wise, $\mathrm{diag}(\mathrm{vec}(C))\geq 0$ entry-wise, and because $C\neq 0$ we have
$\mathrm{diag}(\mathrm{vec}(C))\,\mathrm{vec}(G^\lambda)\neq 0$. Therefore
$$
\partial_\lambda F^\lambda \geq 0,\qquad \partial_\lambda F^\lambda\neq 0.
$$
Finally, since $(-K)^{-1}>0$ entry-wise (Step 4), it follows that
$$
\partial_\lambda q_\infty^\lambda = (-K)^{-1}\,\partial_\lambda F^\lambda > 0
\quad\text{component-wise},
$$
and hence $\partial_\lambda \Gamma_\infty^\lambda>0$ entry-wise.
\end{proof}
In particular, since the stationary spatial variance is the trace of $\Gamma ^\lambda_\infty $ (see Definition \ref{def: spatial variance}), the previous result guarantees that it is monotone increasing with respect to $\lambda$.
\begin{corollary}[Monotonicity of the stationary spatial variance]
Under the assumptions of Proposition \ref{prop:monotone-covariance}, the stationary spatial variance
\[
\mathrm{Var}_{\mathrm{sp},\infty}(\lambda)=\Tr(\Gamma_\infty^\lambda)
\]
is monotone increasing in $\lambda$.
\end{corollary}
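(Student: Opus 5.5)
The plan is to deduce the corollary directly from Proposition~\ref{prop:monotone-covariance}, using linearity of the trace. Under the stated hypotheses, Lemma~\ref{lemma: vectorisation problem in Rd} and Corollary~\ref{cor: non negative  minus inverse K} ensure that $\Gamma_\infty^\lambda$ exists and equals $\mathrm{vec}^{-1}\bigl((-K)^{-1}F^\lambda\bigr)$. By Step 3 of the proof of Proposition~\ref{prop:monotone-covariance}, $K$ does not depend on $\lambda$ in the regime $T^{*,\lambda}\in(T_l,T_u)$. Since $f^\lambda$ is differentiable in $\lambda$ (its derivative is $s\,u^\lambda(z_m)$), the map $\lambda\mapsto\Gamma_\infty^\lambda$ is differentiable. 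Consequently, $\mathrm{Var}_{\mathrm{sp},\infty}(\lambda)=\Tr(\Gamma_\infty^\lambda)$ is differentiable, and
\[
\frac{d}{d\lambda}\mathrm{Var}_{\mathrm{sp},\infty}(\lambda)=\Tr\bigl(\partial_\lambda\Gamma_\infty^\lambda\bigr)=\sum_{i=1}^d\bigl(\partial_\lambda\Gamma_\infty^\lambda\bigr)_{ii}.
\]

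Next I would invoke the conclusion $\partial_\lambda\Gamma_\infty^\lambda>0$ entry-wise, and apply it in particular to the diagonal entries. This makes the sum above strictly positive, so the derivative of the stationary spatial variance is positive for every admissible $\lambda$. To pass to monotonicity I would use the mean value theorem on any interval $[\lambda_1,\lambda_2]$ of forcing values on which the hypotheses hold: it gives $\mathrm{Var}_{\mathrm{sp},\infty}(\lambda_1)<\mathrm{Var}_{\mathrm{sp},\infty}(\lambda_2)$ whenever $\lambda_1<\lambda_2$. This is in fact strict monotonicity.

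The argument has essentially no obstacle; the only point deserving care is the last step. The hypotheses must hold along the whole interval between the two forcing values, not just at the endpoints, so that the derivative bound is available throughout and the mean value theorem applies. I would state this explicitly, reading "for every $\lambda$ under consideration" in Proposition~\ref{prop:monotone-covariance} as referring to an interval.
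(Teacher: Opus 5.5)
Your proposal is correct and follows the paper's own one-line argument: $\mathrm{Var}_{\mathrm{sp},\infty}(\lambda)=\mathrm{Tr}(\Gamma_\infty^\lambda)$ is the sum of the diagonal entries of $\Gamma_\infty^\lambda$, and each of these has strictly positive $\lambda$-derivative by Proposition~\ref{prop:monotone-covariance}. Your point that the hypotheses must hold on a whole interval of forcings before you apply the mean value theorem is a legitimate precision the paper leaves implicit; it is also harmless here, because in the regime $T^{*,\lambda}\in(T_l,T_u)$ the matrix $K$ does not depend on $\lambda$ and $T^{*,\lambda}$ depends affinely on $\lambda$.
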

\begin{remark}[Stronger forcing, stronger climate spatial co-variability]
We remark that Proposition \ref{prop:monotone-covariance} states that
$$
\partial_\lambda (\Gamma_\infty^\lambda)_{ij}>0\qquad \text{for all } i,j,
$$
and therefore, for any two distinct grid points $z_i\neq z_j$,
$$
\partial_\lambda  \mathrm{Cov} \left(Y_\infty^\lambda(z_i),Y_\infty^\lambda(z_j)\right)>0.
$$
In climate terms, increasing radiative forcing increase not only the amplitude of local fluctuations (variance or spatial variance) but also their \emph{spatial co-fluctuation}: anomalies at different locations tend to move more ``together'' in the sense of increasing covariance.

This covariance-amplification effect is complementary to recent statistical work on sphere-cross-time temperature fields which studies temporal evolution (and possible non-stationarity) in the spherical-harmonic coefficients and develops rigorous change-point methodology; see, e.g., \cite{Caponera2021,Caponera2023, Marinucci2021,Marinucci2024}. These contributions focus on detecting departures from stationarity in observed data, while our result provides a forcing-driven mechanism producing monotone covariance amplification in a reduced stochastic EBM.
\end{remark}
Note that among the assumptions of Proposition~\ref{prop:monotone-covariance} there is $C = L L^T \geq0$ element-wise. This is mandatory. Indeed, in the next example, we show that, if the spatial covariance matrix $C$ of the noise has even one negative entry, then the monotonicity of the spatial variance
may fail, even when the noise amplitude is increasing componentwise in $\lambda$.

 \begin{example}[Failure of monotonicity with negative correlations]
Consider the two-dimensional SDE with additive noise
$$
dY_t^\lambda = M Y_t^\lambda\,dt + \Sigma^\lambda\, dW_t,
\qquad Y_0^\lambda=0,
$$
where $W_t$ is a $2$-dimensional Brownian motion and
$$
\Sigma^\lambda := \mathrm{diag}(f^\lambda)\,L,
\quad  
C:=LL^T.
$$
We choose
$$
M=\begin{pmatrix}-1&s\\ s&-1\end{pmatrix},
\qquad 0<s<1,
$$
so that $M$ is symmetric negative definite (its eigenvalues are $-1\pm s<0$).
Let
$$
C=\begin{pmatrix}1&-c\\ -c&1\end{pmatrix},
\qquad 0<c<1,
$$
which is positive definite (its eigenvalues are $1\pm c > 0$) but has
a \emph{negative} off-diagonal entry $C_{12}=-c<0$. Let $L \in \mathbb{R}^{2 \times 2}$ such that $L L^T = C$; note that $L$ exists, for instance, thanks to the Cholesky factorisation. Finally, define 
$$
f^\lambda=\begin{pmatrix}\lambda\\ 1\end{pmatrix},
\qquad \lambda\geq 0,
$$
which is increasing componentwise in $\lambda$.

Let $\Gamma_t ^\lambda:=\mathbb{E}[Y_t^\lambda (Y_t^\lambda)^T]$. Since the noise is additive, It\^{o} formula gives that $\Gamma_t $ solves the ODE
$$
\frac{d}{dt} \Gamma_t ^\lambda = M \Gamma_t ^\lambda + \Gamma_t ^\lambda M^T + S^\lambda,
\quad 
S^\lambda:=\Sigma^\lambda(\Sigma^\lambda)^T.
$$
Because $\Sigma^\lambda= \mathrm{diag}(f^\lambda)L$ and $LL^T=C$, we have
$$
S^\lambda
=  \mathrm{diag}(f^\lambda)\,C\, \mathrm{diag}(f^\lambda)
=
\begin{pmatrix}\lambda^2&-c\lambda\\ -c\lambda&1\end{pmatrix}.
$$
Since $M$ is negative definite, $\Gamma_t ^\lambda$ converges to the unique stationary covariance
$\Gamma_\infty^\lambda$ solving
\begin{equation*}
M \Gamma _\infty^\lambda + \Gamma_\infty^\lambda M + S^\lambda = 0.
\end{equation*}
The stationary covariance matrix $\Gamma_\infty^\lambda$ can also be expressed as
$$
\Gamma_\infty ^\lambda = \int_0^\infty e^{M u} S^\lambda  e^{Mu} du.
$$
Let $\sigma_+ := -1+s$ and $\sigma_- = -1 -s$ be the eigenvalues, and let $v_+,v_-$ be eigenvectors corresponding to respectively to $\sigma_+$ and $\sigma_-$. We can take them orthonormal and given by
$$
v_+ = \frac{1}{\sqrt{2}}\begin{pmatrix}
    1 \\ 1
\end{pmatrix}, \qquad v_{-} =\frac{1}{\sqrt{2}} \begin{pmatrix}
    1 \\ -1.
\end{pmatrix}
$$
Let
$$
V:= \begin{pmatrix}
\vert & \vert \\
    v_+ & v_- \\
    \vert & \vert 
\end{pmatrix}
$$
in such a way that $V V^T = V^T V = I.$ Hence
$$
V^T M V = \begin{pmatrix}
    -1+ s & 0 \\
    0 & -1-s
\end{pmatrix}:= D_M.
$$
Recall that
$$
e^{Mu} = V e^{D_M u} V^T, \quad e^{D_M u } = \begin{pmatrix}
    e^{\sigma_+ u} & 0 \\
    0 & e^{\sigma_- u}
\end{pmatrix}.
$$
Using the linearity and cyclicity of the trace
\begin{equation*}
    \begin{split}
        \mathrm{Tr}  (\Gamma_\infty ^\lambda ) &= \mathrm{Tr}  (\int_0^\infty e^{M u} S^\lambda  e^{Mu} du) = \int_0^\infty \mathrm{Tr} ( e^{M u} S^\lambda e^{Mu} ) du = \int_0^\infty \mathrm{Tr}  ( V e^{D_M u}V^T S^\lambda V e^{D_M u} V^T ) du\\
        & =\int_0^\infty \mathrm{Tr}  ( V^TV e^{D_M u}V^T S^\lambda V e^{D_M u}  ) du \\
        & = \int_0^\infty \mathrm{Tr}  (e^{D_M u} V^T S^\lambda V e^{D_M u}) du \\
        &= \int_0^\infty  \mathrm{Tr}  (e^{2D_M u} V^T S^\lambda V) du.
    \end{split}
\end{equation*}
Setting $\tilde{ S}^\lambda := V^T S^\lambda V$, then
\begin{equation*}
    \begin{split}
        \mathrm{Tr}  (\Gamma_\infty ^\lambda ) &= \int_0^\infty \mathrm{Tr}  (e^{2 D_M u}\tilde{ S}^\lambda ) du = \int_0^\infty \left( e^{2  u\sigma_+ } \tilde S_{11}^\lambda  + e^{2  u\sigma_- } \tilde S_{22}^\lambda \right) du  = -\frac{\tilde S_{11}^\lambda}{2\sigma_+} - \frac{\tilde S_{22}^\lambda}{2\sigma_-}.
    \end{split}
\end{equation*}
Since
$$
\tilde{S}_{11}^\lambda  =  \frac{\lambda^2 - 2c \lambda +1}{2}, \quad \tilde{S}_{22}^\lambda  = \frac{\lambda^2 + 2 c \lambda +1}{2},
$$
we obtain, after some computations
$$
\mathrm{Tr} (\Gamma_\infty ^\lambda) = \frac{\lambda^2 - 2cs \lambda +1}{2(1-s^2)}.
$$
Computing the derivative with respect to $\lambda$, we observe that
$$
\partial_\lambda \mathrm{Tr}(\Gamma_\infty ^\lambda) = \frac{ \lambda - cs}{1-s^2}.
$$
Since $s \in (0,1)$, for $\lambda < cs $ the spatial variance, i.e. the trace of the stationary covariance matrix, is decreasing.

\end{example}

\section{Analysis of observational sea surface temperature data}
\label{sec: three}
\begin{figure}
    \centering
    \includegraphics[width=1\linewidth]{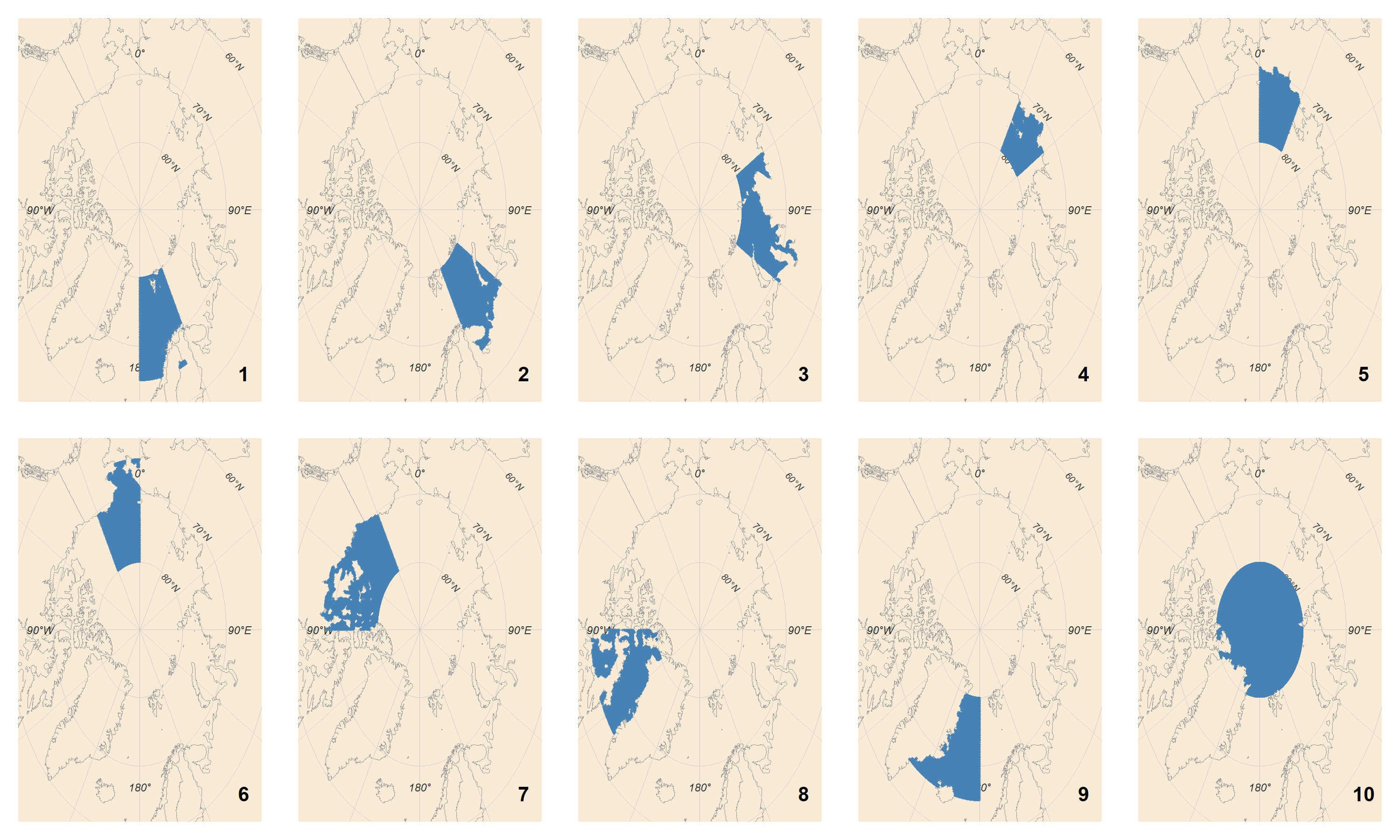}
    \caption{Geographical distribution of the Arctic Ocean and its marginal seas. Numbered labels indicate: (1) Norwegian Sea, (2) Barents Sea, (3) Kara Sea, (4) Laptev Sea, (5) East Siberian Sea, (6) Chukchi Sea, (7) Beaufort Sea, (8) Baffin Bay, (9) Greenland Sea, and (10) Arctic Ocean.}
    \label{fig: area_arctic}
\end{figure}

In this section we compare our results with data. The real-world data are the product of the net outcome of all the geophysical processes at work within the system and from this perspective, the model presented is highly simplified. We focus on Surface Sea Temperature (SST) during summer (August) in the northern high latitudes, above 60°N, where the ice-albedo feedback is predominant. 

In glacial zones small temperature increments can trigger powerful feedbacks that further accelerate the initial warming. In particular, the ice-albedo feedback represents a key mechanism governing Arctic SST, especially during summer months \cite{NOAA_SST_23}.  
When the temperature is near a certain threshold, a small temperature increase triggers ice melting, giving way to liquid water; since water has a much lower albedo, this leads to increased heat absorption and then to higher temperatures. \\
Due to the increase in $\mathrm{CO}_2$, Arctic regions are experiencing the so-called \textit{Arctic Amplification} 
\cite{Serreze} and a severe retreat of sea ice 
\cite{Goosse2009}. Therefore, August SSTs are currently exhibiting higher values than in the past 
\cite{CarvalhoWang2020} 
and northern seas are undergoing a condition of abrupt interannual changes \cite{LandrumHolland2020}
, leading to greater fluctuations in SST \cite{NOAA_SST_23,Alexander2018}. This is the dynamic described by our model in Section \ref{sec: one}.

Concerning Section \ref{sec: two}, we addressed the concept of spatial variance. By this term, we intend to examine how the local mechanism described above manifests on a larger scale. This reflects the spatial uniformity of ice-water dynamics across the entire ocean, clarifying whether the process is homogeneous or characterised by a patchwork distribution \cite{Barber2009,StrongRigor2013,Timmermans2015}. In other words, we are assessing the heterogeneity of the phenomenon, looking at the extent to which the system is fragmented into patches, through the spatial variability of SST. In particular, we have shown that temperature variability, under a certain regime, also increases spatially, implying a patchy behaviour and non-homogeneous distribution.

We used observations from NOAA (National Oceanic and Atmospherical Administration), in particular NOAA OI SST V2 High Resolution Dataset (\url{https://psl.noaa.gov/data/gridded/data.noaa.oisst.v2.highres.html}), provided by the NOAA PSL, Boulder, Colorado, USA (\url{https://psl.noaa.gov}) \cite{Huang2021}. The dataset employed to assess temporal variance analysis contains daily SST values from 1981/09 to 2026/01, with Spatial Coverage of 0.25 degree latitude x 0.25 degree longitude global grid (1440x720). For the spatial variance analysis, we utilised monthly mean SSTs, sharing the same timeframe and geographical extent as the aforementioned dataset.

Initially, to detect signals of temporal SST variability, we first fixed a study area. Than, we extracted data for every August between 1982 and 2025 and we computed the daily spatial mean for each selected month. Subsequently, we calculated statistics (histograms, mean, standard deviation) over overlapping 10-year periods ($31\times10$ data for each period), moving the window forward by one year at a time. 

\begin{figure}
    \centering
    \includegraphics[width=1.1\linewidth]{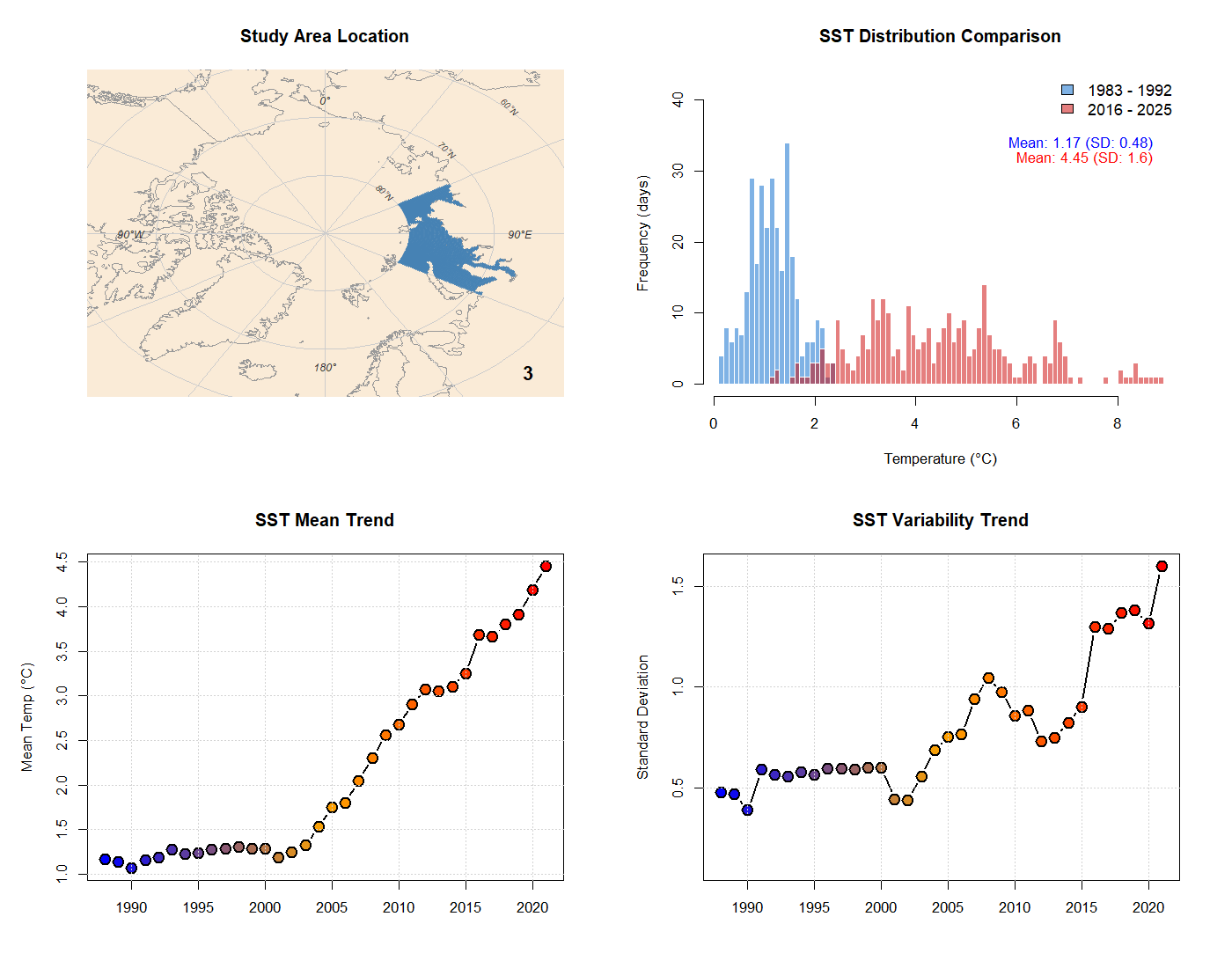}
    \caption{Kara Sea. Top-left: location of the study area. Top-right: comparison between the histograms (number of days) of the spatial-averaged daily August SST for the first decade (1983-1992, blue) and the last calculable decade (2016-2025, red), along with their respective mean and standard deviation values. Bottom-left: decadal spatial mean trend of August SST (centred on the mid-year of each decade, one year moving window). Bottom-right: decadal spatial standard deviation trend of August SST (centred on the mid-year of each decade, one year moving window). The blue-to-red colour scale represents the progression of time across the decades, in accordance with the colour scheme used in the histogram.}
    \label{fig: var_dec_kara}
\end{figure}

\begin{figure}
    \centering
    \includegraphics[width=1.1\linewidth]{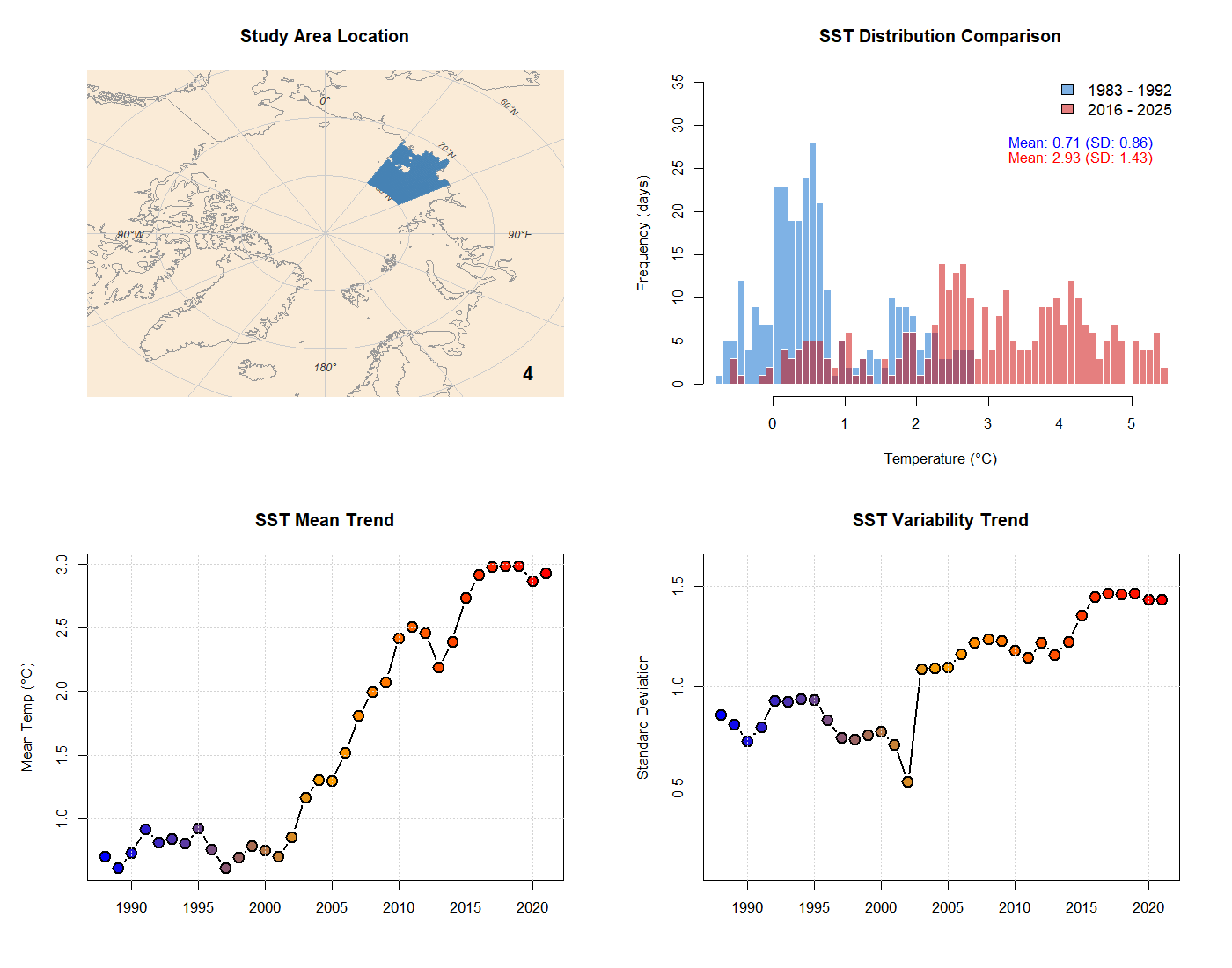}
    \caption{Laptev Sea. Top-left: location of the study area. Top-right: comparison between the histograms (number of days) of the spatial-averaged daily August SST for the first decade (1983-1992, blue) and the last calculable decade (2016-2025, red), along with their respective mean and standard deviation values. Bottom-left: decadal spatial mean trend of August SST (centred on the mid-year of each decade, one year moving window). Bottom-right: decadal spatial standard deviation trend of August SST (centred on the mid-year of each decade, one year moving window). The blue-to-red colour scale represents the progression of time across the decades, in accordance with the colour scheme used in the histogram.}
    \label{fig: var_dec_laptev}
\end{figure}

\begin{figure}
    \centering
    \includegraphics[width=1.1\linewidth]{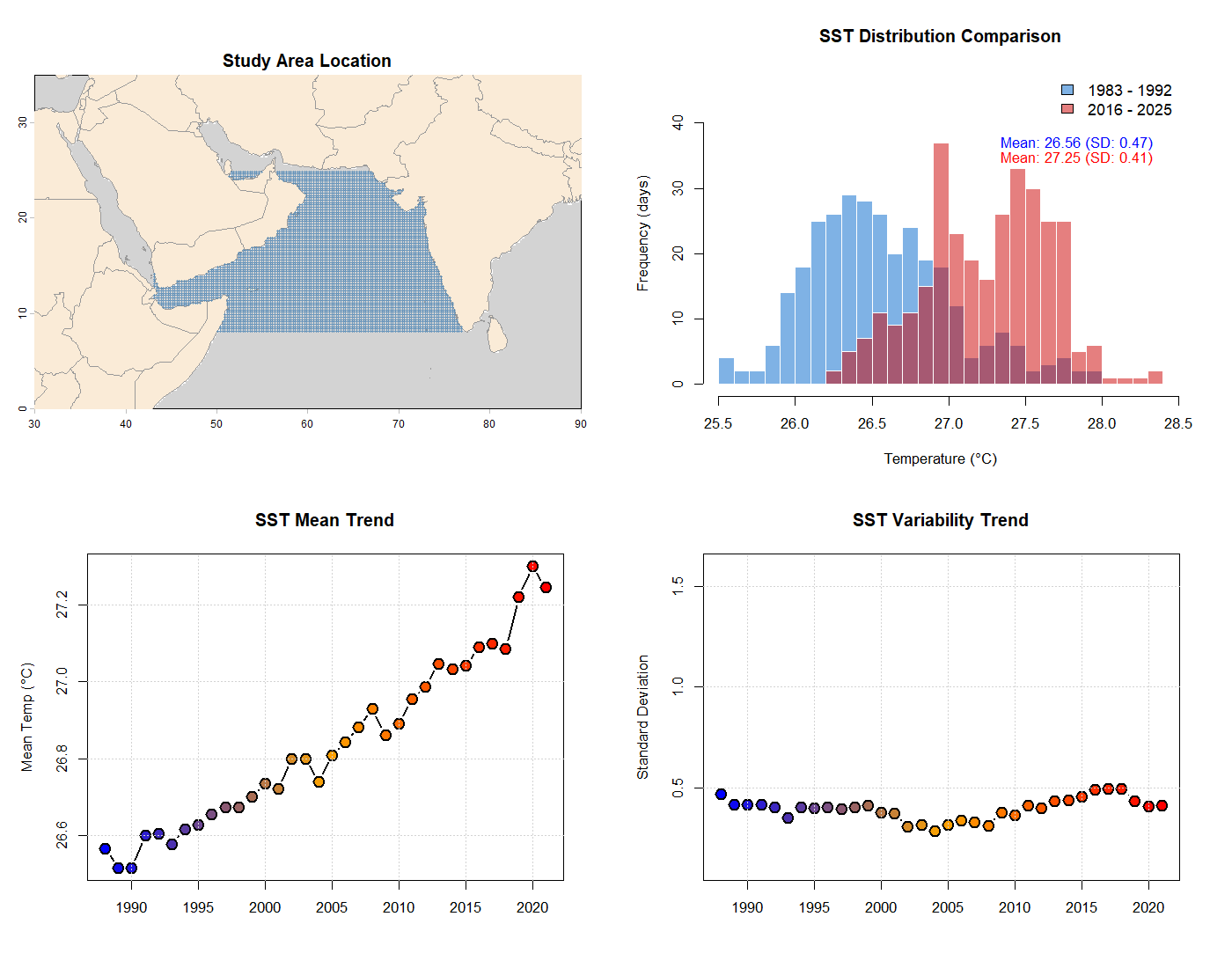}
    \caption{Arabic Sea. Top-left: location of the study area (long-lat). Top-right: comparison between the histograms (number of days) of the spatial-averaged daily August SST for the first decade (1983-1992, blue) and the last calculable decade (2016-2025, red), along with their respective mean and standard deviation values. Bottom-left: decadal spatial mean trend of August SST (centred on the mid-year of each decade, one year moving window). Bottom-right: decadal spatial standard deviation trend of August SST (centred on the mid-year of each decade, one year moving window). The blue-to-red colour scale represents the progression of time across the decades, in accordance with the colour scheme used in the histogram. }
    \label{fig: var_dec_arabic_sea}
\end{figure}

In Figure \ref{fig: var_dec_kara} we show the case of the Kara Sea, a marginal sea of the Arctic Ocean. This example is consistent with our results. The top-left panel indicates the location of the study area. The top-right panel shows a comparison between the histograms of the spatial-averaged daily August SST for the first (blue) and the last (red) calculable decade, along with their respective mean and variance. We note that the distribution has evolved significantly over time: the mean is increased and the tails of the distribution have broadened, as confirmed by the increase in the mean value and standard deviation value, respectively. The bottom-left panel shows the decadal spatial mean of August SST over time (centred on the mid-year of each decade) and a significant increasing trend can be observed. The bottom-right panel shows the decadal spatial standard deviation of August SST over time (centred on the mid-year of each decade). After a stationary phase, also observable in the mean trend, a period of growth can be distinctly discerned, i.e. an increased variability. Similar observations can be done for Figure \ref{fig: var_dec_laptev}, which illustrates the Laptev Sea case \cite{Shabanov2024}.

Comparing the Arabic Sea (Figure \ref{fig: var_dec_arabic_sea}) to the previous cases, the growth trend of the mean is considerably weaker and the distribution tails do not change in shape. Consequently, the standard deviation shows no growth and remains steady. This could possibly be explained by the fact that the equilibrium temperature of that area far exceeds the reference interval, meaning we are referring to the right-side plateau of the co-albedo curve 
(equation \eqref{eq: piecewise co-albedo}), i.e. in \eqref{prop: variance derivative} $\mathrm{Var}_\infty'\equiv 0$.

\begin{figure}
    \centering
    \includegraphics[width=.9\linewidth]{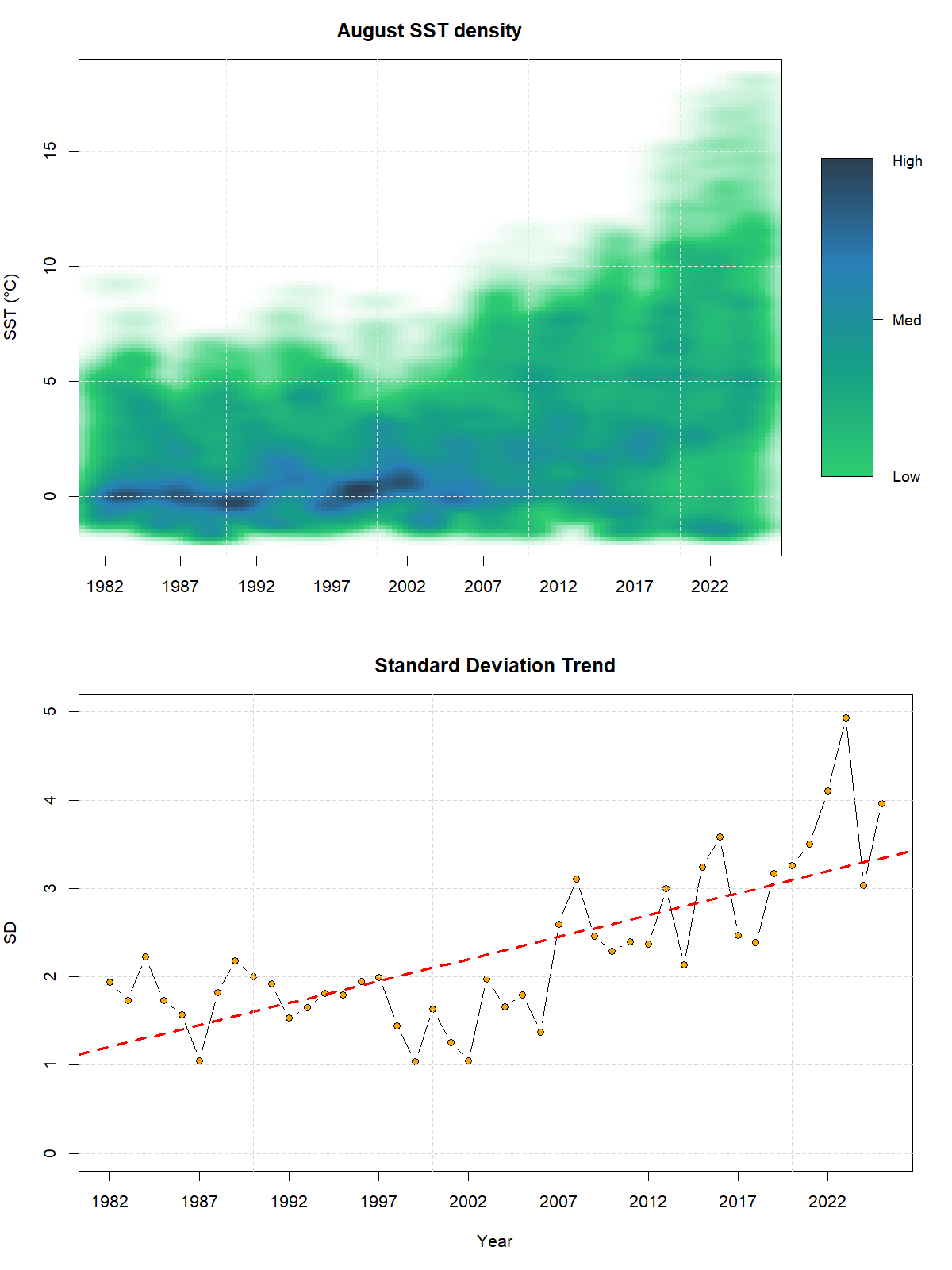}
    \caption{Kara Sea (panel 3, Figure \ref{fig: area_arctic}). Top: density distribution of annual August mean SST values across all individual grid points of the study area. Bottom: interannual standard deviation (dashed line with markers), together with its growth trend (dashed red line).}
    \label{fig: var_spa_kara}
\end{figure}

\begin{figure}
    \centering
    \includegraphics[width=.9\linewidth]{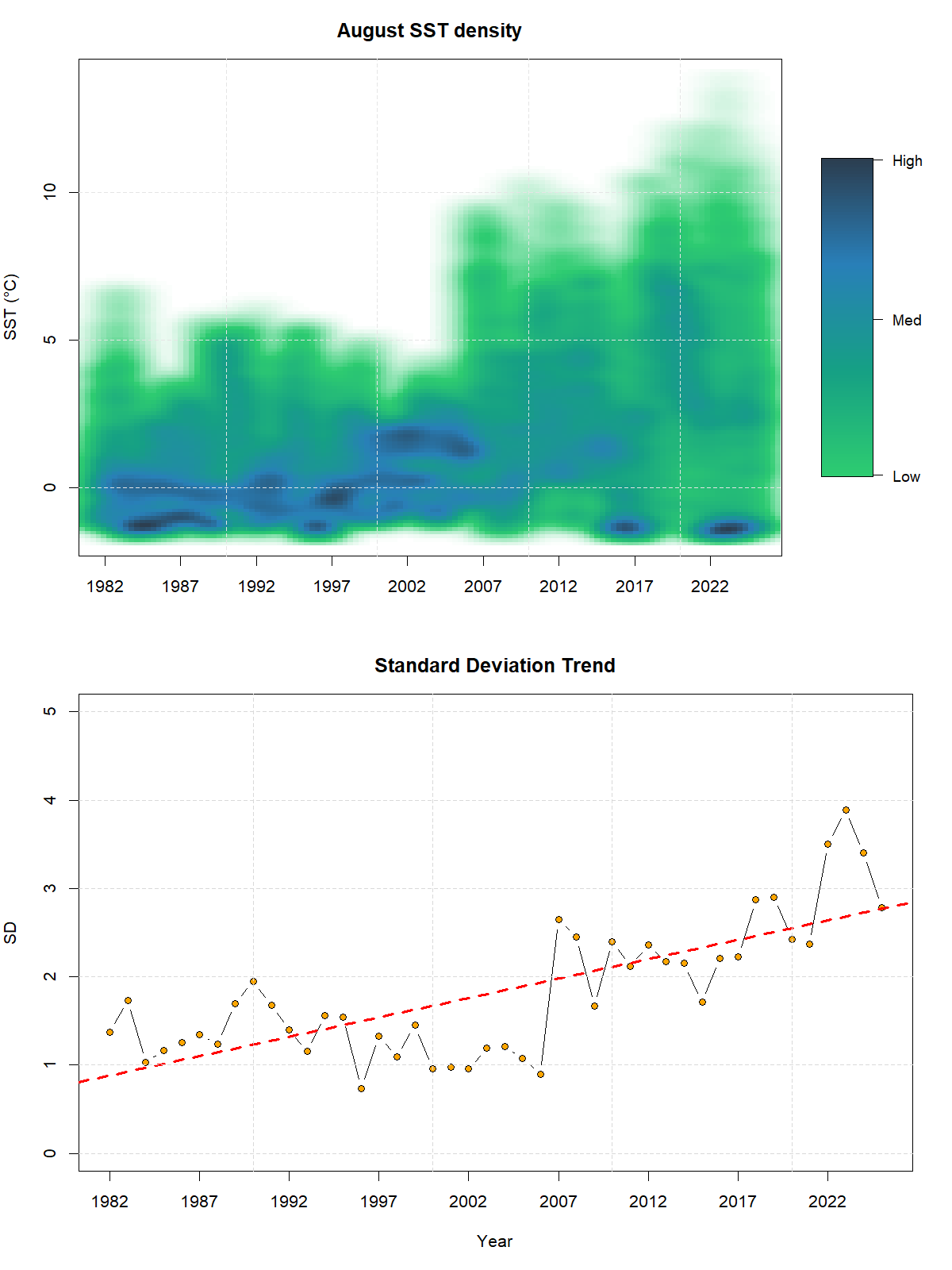}
    \caption{Laptev Sea (panel 4, Figure \ref{fig: area_arctic}). Top: density distribution of annual August mean SST values across all individual grid points of the study area. Bottom: interannual standard deviation (dashed line with markers), together with its growth trend (dashed red line).}
    \label{fig: var_spa_laptev}
\end{figure}

Regarding the spatial variability of temperature, as mentioned before, we changed dataset. For each grid point within the study area, we used the August mean SST value to create a density map, where the colour intensity represents the frequency of SST values over time. We then calculated the standard deviation for each year, thus capturing the area's spatial spread, the fluctuations around the average value, i.e. its heterogeneity.

Figures \ref{fig: var_spa_kara} and \ref{fig: var_spa_laptev} illustrate the spatial variability of the previously discussed seas, i.e. the Kara and Laptev Seas, respectively. The top panel shows the density map. In both cases, a clear widening of the spread is visible, indicating greater dispersion. The lower panel displays the interannual standard deviation, together with its growth trend (dashed red line), which is increasing.

\begin{figure}
    \centering
    \includegraphics[width=.9\linewidth]{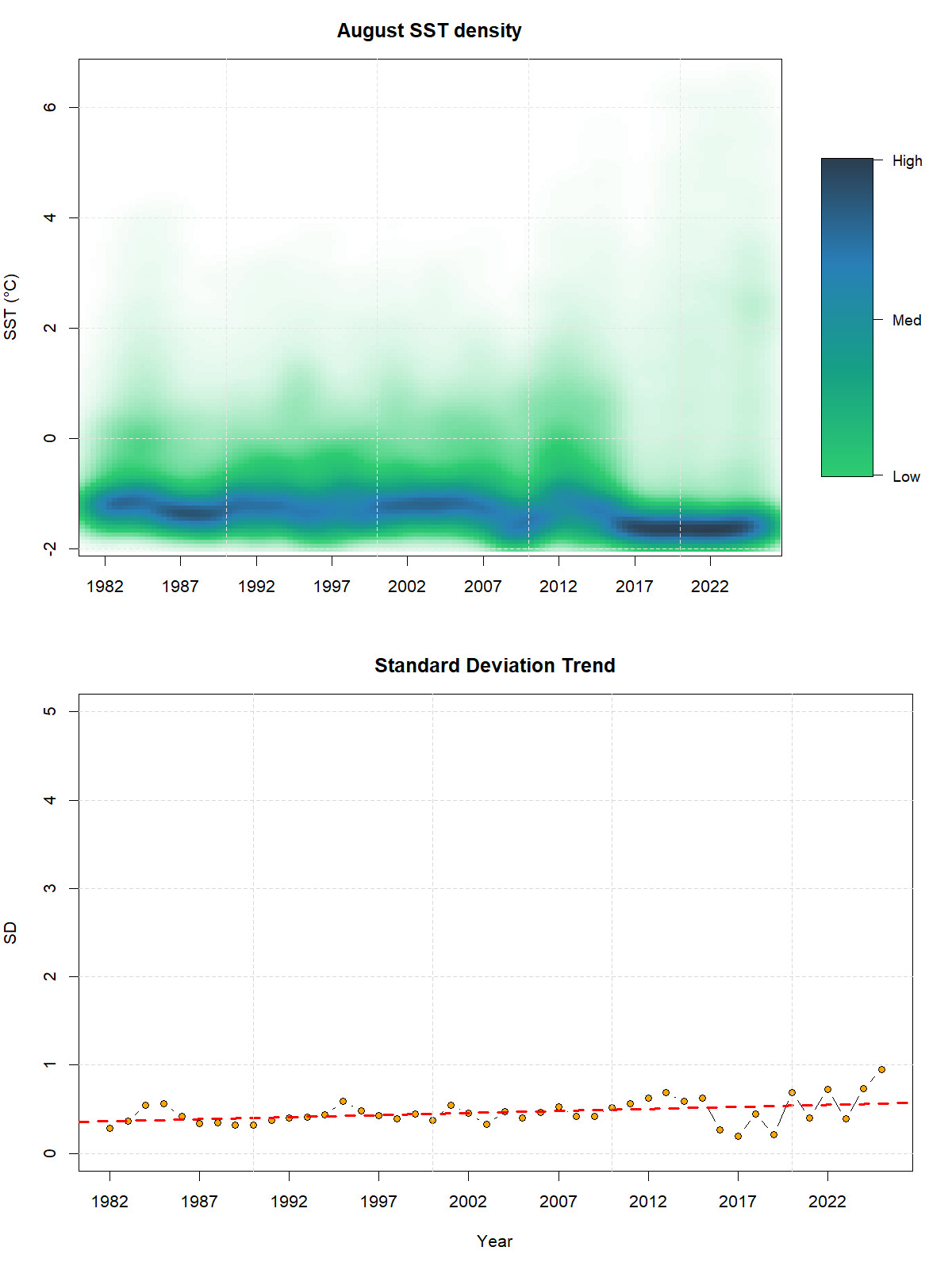}
    \caption{Arctic Ocean (panel 10, Figure \ref{fig: area_arctic}). Top: density distribution of annual August mean SST values across all individual grid points of the study area. Bottom: interannual standard deviation (dashed line with markers), together with its growth trend (dashed red line).}
    \label{fig: var_spa_arctic}
\end{figure}

In contrast to the aforementioned seas, the Arctic Ocean (panel 10, Figure \ref{fig: area_arctic}) exhibits a different behaviour. As shown in Figure \ref{fig: var_spa_arctic}, the standard deviation remains stationary. This could possibly be explained by the fact that the average temperatures of those seas are still too low; that is, we are referring to the left-side plateau of the co-albedo curve (equation \eqref{eq: piecewise co-albedo}), i.e. in \eqref{prop: variance derivative} $\mathrm{Var}_\infty'\equiv 0$. Indeed, persistent ice still characterises this area even during August.

\section*{Acknowledgments}
G.D.S. acknowledges the support from the DFG project FOR~5528. The research of F.F. is funded by the PRIN-PNRR project “Some mathematical
approaches to climate change and its impacts” n. P20225SP98 and the European Union (ERC, NoisyFluid, No. 101053472). Views and opinions expressed are however those of the authors only and do not necessarily reflect those of the European Union or the European Research Council. Neither the European Union nor the granting authority can be held responsible for them. M.L. is supported by the the Italian national inter-university PhD course in Sustainable Development and Climate change. M.L. produced this work while attending the PhD programme in
PhD in Sustainable Development And Climate Change at the University School for Advanced Studies IUSS
Pavia, Cycle XXXVIII, with the support of a scholarship financed by the Ministerial Decree no. 351 of 9th
April 2022, based on the NRRP - funded by the European Union - NextGenerationEU - Mission 4 "Education
and Research", Component 1 "Enhancement of the offer of educational services: from nurseries to
universities” - Investment 3.4 “Advanced teaching and university skills”. 


\end{document}